\documentclass[11pt,reqno]{amsart}
\usepackage[a4paper,margin=29mm]{geometry}
\usepackage[T1]{fontenc}
\usepackage{lmodern}
\usepackage{amsmath,amssymb,mathtools}
\usepackage{microtype}
\usepackage[unicode,colorlinks=true,linkcolor=blue!45!black,citecolor=blue!45!black,urlcolor=blue!45!black]{hyperref}
\usepackage{xcolor}
\numberwithin{equation}{section}
\newtheorem{theorem}{Theorem}[section]
\newtheorem{proposition}[theorem]{Proposition}
\newtheorem{lemma}[theorem]{Lemma}
\newtheorem{corollary}[theorem]{Corollary}
\theoremstyle{remark}

\newcommand{\R}{\mathbb R}
\newcommand{\Z}{\mathbb Z}
\newcommand{\T}{\mathbb T}
\newcommand{\HH}{\mathbb H^2}
\newcommand{\dd}{\,\mathrm d}
\newcommand{\eps}{\varepsilon}
\newcommand{\supp}{\operatorname{supp}}
\newcommand{\Sch}{\mathcal S}
\newcommand{\norm}[2][]{\left\lVert #2\right\rVert_{#1}}

\newcommand{\br}[1]{\left\langle #1\right\rangle}
\newcommand{\U}{U}
\newcommand{\DHyper}{D_H}
\newcommand{\X}{\mathcal X}

\hypersetup{pdftitle={Schrodinger estimates with arbitrarily small periodic regularity on H2 times T},pdfauthor={Xuyi Wei},pdfsubject={Anisotropic Strichartz estimates},pdfkeywords={Schrodinger equation, hyperbolic space, periodic direction, decoupling}}
\title[Strichartz estimates on $\HH\times\T$]{Schr\"odinger estimates with arbitrarily small periodic regularity on $\HH\times\T$}
\author{Xuyi Wei}
\address{School of Mathematical Sciences, Beihang University, Beijing, China}
\email{weixuyi@buaa.edu.cn}

\date{}
\begin{document}
\begin{abstract}
Let $M=\mathbb H^2\times\mathbb T$ carry its product metric. We prove that the Schr\"odinger evolution maps $L^2(\mathbb H^2;H^s(\mathbb T))$ into $L^2([0,1];L^6(M))$ for every $s>0$, without any regularity assumption in the hyperbolic variable. The proof separates hyperbolic and periodic frequencies. A localized hyperbolic kernel estimate restricts the effective interaction time at hyperbolic frequency $L$ to order $L^{-1}$. Quadratic exponential sums then give a logarithmic bound when the periodic bandwidth is comparable to $L$. A mixed-norm decoupling argument, including an $L^6$-valued extension and a periodic transference step, reduces wider periodic bands to this case. The infimum of the admissible periodic regularities is therefore zero. The estimate at zero regularity is not established here.
\end{abstract}
\maketitle

\section{Introduction}

Set $\T=\R/(2\pi\Z)$ and equip
\[
 M=\HH\times\T
\]
with the product metric. The hyperbolic metric has constant curvature $-1$. We use the nonpositive Laplace--Beltrami operator, so that
\begin{equation}\label{eq:product}
 \Delta_M=\Delta_H+\partial_\theta^2,
 \qquad
 \U(t)=e^{it\Delta_M}=e^{it\Delta_H}e^{it\partial_\theta^2}.
\end{equation}
All $L^p$ spaces on $M$ are defined using the product Riemannian measure. Write
\[
 \br{D_\theta}^{s}=(1-\partial_\theta^2)^{s/2}.
\]
The pair $(2,6)$ is the three-dimensional Schr\"odinger endpoint. The product geometry nevertheless contains only two noncompact directions, while the periodic evolution has rational-time concentration. The purpose of this paper is to obtain the endpoint space-time exponents with an arbitrarily small regularity cost confined to the periodic variable.

\begin{theorem}\label{thm:main}
For every $s>0$ there is a constant $C_s$ such that
\begin{equation}\label{eq:main}
 \norm[{L^2([0,1];L^6(M))}]{\U(t)f}
 \le C_s\norm[{L^2(M)}]{\br{D_\theta}^{s}f}.
\end{equation}
Consequently, if
\[
 \mathcal A=\{s\in\R:\text{\eqref{eq:main} holds with a constant independent of }f\},
\]
then $(0,\infty)\subset\mathcal A\subset[0,\infty)$ and $\inf\mathcal A=0$.
No assertion is made about membership of $0$ in $\mathcal A$.
\end{theorem}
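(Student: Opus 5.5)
We follow the strategy sketched in the abstract: decompose $f$ in hyperbolic and periodic frequencies, handle the balanced case by exponential sums, and reduce the unbalanced case by decoupling. Write $f=\sum_{L}\sum_{N}P_L^HP_N^\theta f$ with dyadic $L,N\ge1$. Here $P_L^H$ is a smooth spectral projector of $\sqrt{-\Delta_H-\tfrac14}$ to $[L,2L]$, and $P_N^\theta$ localizes $|k|\sim N$. Since $L^2_tL^6_x$ is not a square-function space in $L$, we first use Littlewood--Paley theory on $\HH$. This is valid for $p=6$ via the spectral multiplier theorem. Together with Minkowski ($6\ge2$), it gives $\|\U f\|_{L^2L^6}\lesssim\|(\sum_L|\U P_Lf|^2)^{1/2}\|_{L^2L^6}\le(\sum_L\|\U P_Lf\|_{L^2L^6}^2)^{1/2}$. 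Orthogonality then reduces matters to a single hyperbolic frequency $L$. The periodic sum over $N$ is handled by the triangle inequality: the factor $N^{-s}$ makes $\sum_N$ convergent, as long as each piece costs at most $N^{\epsilon}$ with $\epsilon<s$. So it suffices to prove, for every $\epsilon>0$,
\[
\|\U P_LP_N^\theta f\|_{L^2([0,1];L^6)}\lesssim_\epsilon N^{\epsilon}\|f\|_{L^2}.
\]
It is essential that there is no loss in $L$.

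\textbf{Step 1 (time localization).} Apply a $TT^*$ argument to the operator $f\mapsto \U P_L f$ on $[0,1]$. The kernel of $e^{it\Delta_H}P_L^2$ on $\HH$ obeys a dispersive bound of the form $|t|^{-1}$ for $|t|\lesssim L^{-1}$. For $|t|\gtrsim L^{-1}$ it gains from the exponential volume growth: the kernel at distance $r$ carries the factor $e^{-r/2}$, and the wave packets move at speed $\sim L$. This is the Kunze--Stein / Anker--Pierfelice type improvement. We aim to show that only $|t-t'|\lesssim L^{-1}$ contributes to leading order; the remainder is summable in $L^2\to L^2$ over time-separated blocks. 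The problem is then reduced to time intervals $I$ of length $L^{-1}$, with square-summation over the disjoint $I$'s.

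\textbf{Step 2 (balanced case $N\lesssim L$).} On an interval $I$ of length $L^{-1}$, the periodic part $e^{it\partial_\theta^2}P_N^\theta$ with $N\le L$ evolves for a time at most $N^{-1}$ on the natural scale. Concretely, over such short times the periodic Schr\"odinger kernel $\sum_{|k|\le N}e^{ik\theta-ik^2t}$ obeys the Weyl/Gauss-sum bound $\lesssim |t|^{-1/2}$ for $|t|\lesssim N^{-1}$, up to the rational-time concentration. This gives an essentially three-dimensional dispersive estimate $|t|^{-3/2}$ on $I$. Keel--Tao then yields the endpoint $(2,6)$ on $I$ without loss, or with at most a $\log N$ loss from the quadratic exponential sums near $t\sim N^{-1}$. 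Summing over $I$ by orthogonality from Step 1, the estimate closes with loss $\log N\lesssim N^\epsilon$.

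\textbf{Step 3 (wide band $N\gg L$), the main obstacle.} Partition the periodic frequencies $|k|\sim N$ into arcs $J$ of length $L$. On each arc, write $k=k_J+j$ and apply a Galilean transformation in $\theta$; this turns $P_J f$ into a band of width $L$, so Step 2 applies with a $\log L$ loss. To recombine the arcs, we need an $\ell^2$ decoupling inequality for the parabola in the $(\theta,t)$ variables at scale $t\in I$, $|I|=L^{-1}$, with frequency caps of length $L$ out of $N$. The decoupling must be vector-valued: the values lie in $L^6(\HH)$, since the hyperbolic variable rides along. Bourgain--Demeter gives $\ell^2L^6$ decoupling for the parabola with loss $(N/L)^{\epsilon}$. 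Its proof via induction on scales and multilinear Kakeya extends to $L^6(\HH)$-valued functions by Minkowski/Fubini because the exponent is $6$ throughout. Applying it on $\T$ rather than $\R$ requires a transference step: we periodize, using that $\theta$-frequencies are integers and that the time scale $L^{-1}\ge N^{-2}$ is admissible. We expect this step, getting the mixed-norm decoupling to interact correctly with the hyperbolic time localization and periodic transference, to be the delicate point. Combined with Step 2, it gives loss $N^{\epsilon}\log L$.

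The $\log L$ appearing in Step 3 must be absorbed. Here $L\ll N$ and so $\log L\lesssim N^\epsilon$. Summing over $N$ with $N^{-s}$ gives \eqref{eq:main}. Finally, $\mathcal A\subset[0,\infty)$ follows by testing with functions with hyperbolic frequency $L\to\infty$ and periodic frequency $N$ concentrated appropriately, for which the ratio of the two sides of \eqref{eq:main} is at least $cN^{-s}$ when $s<0$. Hence $\inf\mathcal A=0$.
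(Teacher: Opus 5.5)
There is a genuine gap in how you sum over the hyperbolic frequency $L$. You derive $\|\U f\|_{L^2L^6}\lesssim\|(\sum_L|\U P_Lf|^2)^{1/2}\|_{L^2L^6}$ from a Littlewood--Paley theorem on $L^6(\HH)$, justified by the spectral multiplier theorem, with $P_L^H$ a compactly supported cutoff of $\DHyper$. That theorem is not available on hyperbolic space. By the Clerc--Stein necessary condition, an $L^p(\HH)$ spectral multiplier with $p\neq2$ must extend holomorphically to a strip about the real axis. So a nonzero compactly supported $p_L(\DHyper)$ is not even bounded on $L^6(\HH)$ or $L^{6/5}(\HH)$, and no multiplier theorem gives your square-function bound; the paper explicitly avoids any such estimate. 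Switching to holomorphic symbols would cost you the compact spectral support behind $|\tau+n^2|\lesssim L^2$, which your decoupling step needs. Without the square function you only have the triangle inequality and Cauchy--Schwarz in $L$, and these require block constants with $\sum_LC_{L,N}^2<\infty$. Your Step~2 gives a bound uniform in $L$ for $L\ge N$, which is not summable over infinitely many dyadic $L$. The missing idea is a \emph{gain} in $L$ for $L\gg N$, not merely the absence of loss you call essential. The localized kernel bound $|t|^{-1}(1+L|t|)^{-K}$, together with the trivial bound $N$ for the periodic kernel, gives a $TT^*$ time integral $\int_0^1t^{-2/3}(1+Lt)^{-K}N^{2/3}\,dt\lesssim N^{2/3}L^{-1/3}$. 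Hence $C_{L,N}\lesssim N^{1/3}L^{-1/6}$, which is square-summable over $L>N^2$. The $O(\log N)$ scales $N<L\le N^2$ cost only a power of $\log(2N)$, and $(N/L)^\eps\sqrt{\log(2L)}$ is square-summable over $L\le N$, as in \eqref{eq:three-C}.

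Two further steps need repair. First, Step~1 is not a valid reduction to intervals of length $L^{-1}$ for a band of width $N\gg L$. By Parseval, $\sup_\theta|K_N^\T(t,\theta)|\gtrsim\sqrt N$ for every $t$, while the hyperbolic kernel has size about $L$ at $|t|\sim L^{-1}$. So the interpolated kernel bound for the part of $TT^*$ with $L^{-1}\le|t-s|\le2L^{-1}$ already integrates to order $(N/L)^{1/3}$, which is exactly the loss decoupling must remove. You should decouple over the whole unit time interval, using a time cutoff of compact Fourier support and square-summing over short time intervals inside the decoupling inequality. Only then apply the time-localized $TT^*$ argument, and only to the width-$L$ arcs, where the off-diagonal part is harmless; this is the order in Proposition~\ref{prop:lowH} and Corollary~\ref{cor:matched}. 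Second, the decoupling you need is in $L_t^2L^6_{\theta,x}$, so the exponent is not $6$ throughout. Fubini and Minkowski give only the $L^6_{t,\theta,x}$ vector-valued inequality. Reaching time exponent $2$ needs a reverse H\"older inequality for each Galilean-sheared arc on time intervals of length about $L^{-2}$, with the hyperbolic variable kept inside the $L^6$ norm (Proposition~\ref{prop:vector-dec}). Finally, for $\mathcal A\subset[0,\infty)$ do not let the hyperbolic frequency grow, since the norm is then $O(L^{-1/6})$. A fixed hyperbolic profile times $e^{in\theta}$ makes the left side of \eqref{eq:main} independent of $n$, which suffices. With these repairs, your balanced-case route through local Keel--Tao on intervals of length $L^{-1}$ is a legitimate alternative to the paper's direct Young-inequality bound.
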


The estimate extends to any bounded time interval. In particular, for $T>0$,
\begin{equation}\label{eq:finite-time}
 \norm[{L^2([0,T];L^6(M))}]{\U(t)f}
 \le C_s\sqrt{\lceil T\rceil}\,
 \norm[{2}]{\br{D_\theta}^{s}f}.
\end{equation}
Indeed, one applies \eqref{eq:main} on successive intervals of length at most one, using unitarity and commutation with $\br{D_\theta}^{s}$.

\subsection{Relation to previous work}

The Euclidean estimate at the pair $(2,6)$ is part of the endpoint Strichartz theory of Keel and Tao \cite{KT}. The usual abstract implication from a three-dimensional dispersive bound cannot be applied directly to \eqref{eq:product}: the unlocalized periodic propagator has rational-time concentration. Periodic Fourier restriction estimates originating in Bourgain's work \cite{Bourgain93} and the derivative-loss estimates on compact manifolds of Burq, G\'erard, and Tzvetkov \cite{BGT} provide two relevant points of comparison. The latter results concern fully compact geometries and do not by themselves determine an anisotropic loss on the noncompact product considered here.

A compact factor does not force a positive regularity threshold for every space-time norm. Takaoka and Tzvetkov \cite{TT} proved a local lossless $L^4$ estimate on $\R\times\T$. Barron, Christ, and Pausader \cite{BCP} obtained a corresponding global estimate with an $\ell^8$ summation over unit time intervals. Barron \cite{Barron} studied global estimates on more general Euclidean--periodic products using norms on unit time strips. These results concern other space-time norms and geometries; in particular, they do not settle the mixed endpoint $L_t^2L_{x,\theta}^6$ on $\HH\times\T$.

Dispersive estimates on products can be obtained by combining suitable estimates on the factors; see Pierfelice \cite{Pierfelice}. In the present proof this idea is used after frequency localization, when the periodic kernel becomes a finite smooth quadratic sum. We do not assume a dispersive estimate for the unlocalized circle evolution. The tensor factorization is used only in the localized $TT^*$ kernel, after which the arithmetic time integral is estimated separately.

On the hyperbolic side, the spherical transform and the Harish--Chandra expansion are classical; see Helgason \cite{Helgason}. Anker and Pierfelice \cite{AP} established dispersive and Strichartz estimates on real hyperbolic spaces. Chen and Hassell \cite{CH} developed oscillatory spectral-measure estimates on nontrapping asymptotically hyperbolic manifolds, and Germain and L\'eger \cite{GL} studied spectral projectors, resolvents, and Fourier restriction on hyperbolic space. We use the explicit real-hyperbolic structure to prove a frequency-dependent kernel bound in the precise form needed here. The differentiated spherical amplitudes are established in Appendix~\ref{app:spherical}, rather than inferred from an undifferentiated asymptotic formula.

There is an elementary bound with a one-third derivative in the periodic variable, obtained from $H^{1/3}(\T)\hookrightarrow L^6(\T)$ and a vector-valued hyperbolic Strichartz estimate; its proof is included in Section~\ref{sec:baseline}. This argument discards the periodic time oscillation. The purpose of the arithmetic and decoupling steps below is to retain that oscillation and reduce the regularity cost to every positive $s$.

The harmonic-analytic input is the parabola $\ell^2$ decoupling theorem of Bourgain and Demeter \cite{BD}. Its mixed-norm form was established by Dasu, Jung, Li, and Madrid \cite{DJLM}. We give the particular extension needed here: the additional hyperbolic variable remains inside the spatial $L^6$ norm. Applying scalar mixed-norm decoupling pointwise in that variable and then interchanging norms would not justify this step. Instead, we start from pure $L^6$ decoupling and prove the reduction of the time exponent directly by a reproducing-kernel argument.

\subsection{Outline and analytic inputs}

The proof has three components. At hyperbolic frequency $L$, the localized kernel obeys
\begin{equation}\label{eq:intro-kernel}
 \norm[{L^1(\HH)\to L^\infty(\HH)}]{e^{it\Delta_H}\psi(\DHyper/L)}
 \lesssim_K |t|^{-1}(1+L|t|)^{-K},
 \qquad \DHyper=\sqrt{-\Delta_H-\tfrac14}.
\end{equation}
Together with quadratic exponential-sum estimates, this yields a bound of order $\sqrt{\log(2L)}$ when the periodic bandwidth is $O(L)$. If the periodic frequency scale is $N\ge L$, decoupling divides it into intervals of width $L$ at cost $(N/L)^\eps$. The remaining frequency sums converge after an arbitrarily small positive power of $N$ is allowed.

Spectral inversion and the scalar parabola decoupling theorem are used as established inputs. The kernel estimates, arithmetic summation, vector-valued extension, periodic transference, and frequency reconstruction used in the argument are proved below. Appendix~\ref{app:spherical} supplies the uniform spherical-function symbol bounds needed for repeated integration by parts.

\section{Spectral localization and the hyperbolic kernel}

\subsection{Fourier conventions and the anisotropic data space}

The coordinate on $\T=\R/(2\pi\Z)$ is integrated against $\dd\theta$, without normalization. Thus
\begin{equation}\label{eq:fourier-series-convention}
 f_n(x)=\frac1{2\pi}\int_0^{2\pi}f(x,\theta)e^{-in\theta}\dd\theta,
 \qquad f(x,\theta)=\sum_{n\in\Z}f_n(x)e^{in\theta},
\end{equation}
and Parseval's identity gives
\begin{equation}\label{eq:data-norm}
 \norm[{2}]{\br{D_\theta}^s f}^2
 =2\pi\sum_{n\in\Z}(1+n^2)^s\norm[{L^2(\HH)}]{f_n}^2.
\end{equation}
This identity defines the data space by completion and makes explicit that no hyperbolic derivative is present. On Euclidean variables our Fourier convention is
\[
 \widehat F(\tau,\xi)=\int_{\R^2}e^{-i(t\tau+y\xi)}F(t,y)\dd t\dd y,
 \qquad
 F(t,y)=\frac1{(2\pi)^2}\int_{\R^2}e^{i(t\tau+y\xi)}\widehat F(\tau,\xi)\dd\tau\dd\xi.
\]
We reserve $\lambda\ge0$ for the spectral parameter of $\DHyper=\sqrt{-\Delta_H-1/4}$. The self-adjoint realization of $-\Delta_H$ on the complete hyperbolic plane has spectrum $[1/4,\infty)$; its spectral resolution defines $\DHyper$ and all the multipliers used below. The functional calculi in $x$ and $\theta$ commute, since they act on distinct factors. In particular,
\begin{equation}\label{eq:mode-evolution}
 \U(t)f=\sum_{n\in\Z}e^{-itn^2}\bigl(e^{it\Delta_H}f_n\bigr)e^{in\theta}
\end{equation}
in $L^2(M)$, and the evolution is unitary on each data space in \eqref{eq:data-norm}.

Unless explicitly stated otherwise, implicit constants are independent of the dyadic parameters, interval centers, and Fourier mode centers. Constants with a subscript $\eps$ may depend on a fixed positive $\eps$. Every sum in a dyadic scale is over powers of two, not over all positive integers.

\subsection{Frequency decompositions}

Let $\mathcal D=\{1,2,4,8,\ldots\}$. Choose a smooth partition of unity $\{p_L\}_{L\in\mathcal D}$ on $[0,\infty)$ such that $p_1$ is supported in $[0,2]$ and, for $L\ge2$, $p_L$ is supported in $[L/2,2L]$. The dilated symbols have bounded seminorms, and the supports overlap a bounded number of times. Put $P_L=p_L(\DHyper)$. The spectral theorem gives
\begin{equation}\label{eq:H-L2}
 f=\sum_{L\in\mathcal D}P_Lf\quad\text{in }L^2,
 \qquad
 \sum_L\norm[{2}]{P_Lf}^2\lesssim\norm[{2}]{f}^2.
\end{equation}
Choose an analogous smooth Fourier decomposition $Q_N=q_N(D_\theta)$ on $\T$, with $N=1$ containing the low modes. For example, take a smooth even function $q_1$ equal to one on $[-1,1]$ and supported in $[-2,2]$, and put
\[
 q_{2^j}(\xi)=q_1(2^{-j}\xi)-q_1(2^{-j+1}\xi),\qquad j\ge1.
\]
The corresponding partial sums telescope to the identity. The support of $q_N$ is contained in $\{N/2\le|\xi|\le2N\}$ for $N\ge2$, and only a bounded number of symbols are nonzero at any one frequency. Enlarged cutoffs, equal to one on the indicated supports, will be used when forming $TT^*$.

For later use, Parseval gives the quantitative bound
\begin{align}\label{eq:theta-weighted-parseval}
 \sum_{N\in\mathcal D}N^{2s}\norm[{2}]{Q_Nf}^2
 &=2\pi\sum_{n\in\Z}\Bigl(\sum_{N\in\mathcal D}N^{2s}|q_N(n)|^2\Bigr)
                   \norm[{L^2(\HH)}]{f_n}^2\notag\\
 &\le C_s\norm[{2}]{\br{D_\theta}^sf}^2.
\end{align}
Indeed, on each nonzero summand $N\asymp\br n$, and the number of summands is uniformly bounded. The same reasoning in the spectral measure of $\DHyper$ proves \eqref{eq:H-L2}; no assertion about $L^p(\HH)$ multipliers enters this argument.

No $L^p$ Littlewood--Paley theorem for compactly supported hyperbolic spectral multipliers is used. In the hyperbolic variable, reconstruction will rely only on \eqref{eq:H-L2} and Cauchy--Schwarz. In the periodic variable, the usual Littlewood--Paley theorem is available.

For a time interval $I$, write
\begin{equation}\label{eq:X-def}
 \X(I)=L^2(I;L^6(M)),\qquad
 \X'(I)=L^2(I;L^{6/5}(M)).
\end{equation}
For the Euclidean time Fourier transform we use the phase $e^{-it\tau}$. Thus a factor $e^{-it(n^2+\lambda^2+1/4)}$ has time frequency $\tau=-n^2-\lambda^2-1/4$.

\subsection{Spherical-function bounds}

Let $r$ denote hyperbolic distance. The spherical inversion formula \cite[Chapter IV]{Helgason} gives the radial kernel of $m(\DHyper)$ as
\begin{equation}\label{eq:spherical-inversion}
 k_m(r)=c_H\int_0^\infty m(\lambda)\varphi_\lambda(r)
                \lambda\tanh(\pi\lambda)\dd\lambda,
\end{equation}
where $c_H>0$ depends only on the normalization of measure. We use the following standard spherical-function estimates; they also follow from the spectral-measure structure in \cite[Theorem 1.3]{CH}, specialized to real hyperbolic space.

\begin{lemma}[Spherical-function bounds]\label{lem:spherical}
For $\lambda\ge1$, the following assertions hold.
\begin{enumerate}
\item The spherical function has the boundary integral representation
\begin{equation}\label{eq:boundary-spherical}
 \varphi_\lambda(r)=\frac1{2\pi}\int_0^{2\pi}
 (\cosh r-\sinh r\cos\alpha)^{-1/2-i\lambda}\dd\alpha.
\end{equation}
In particular, $|\varphi_\lambda(r)|\le\varphi_0(r)\le1$.
\item In the region $\lambda r\ge1$, one may write
\begin{equation}\label{eq:spherical-symbol}
 \lambda\tanh(\pi\lambda)\varphi_\lambda(r)
 =\sum_{\pm}e^{\pm i\lambda r}b_\pm(\lambda,r),
\end{equation}
where
\begin{equation}\label{eq:b-symbol}
 |\partial_\lambda^j b_\pm(\lambda,r)|
 \le C_j\lambda^{1/2-j}(\sinh r)^{-1/2},\qquad j\ge0.
\end{equation}
The estimates hold on slightly enlarged regions $\lambda r\ge c>0$, with constants depending on $c$.
\end{enumerate}
\end{lemma}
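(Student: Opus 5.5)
For part (1), I would begin with the Harish--Chandra integral formula for spherical functions on $\HH$. In the disc or upper-half-plane model, the spherical function with spectral parameter $\lambda$ is the average over the boundary circle of the Poisson-type kernel $P(z,b)^{1/2+i\lambda}$. Evaluated at a point $z$ at distance $r$ from the origin, this kernel equals $(\cosh r-\sinh r\cos\alpha)^{-1}$, where $\alpha$ is the boundary angle. This gives \eqref{eq:boundary-spherical} directly, with the normalization $\varphi_\lambda(0)=1$. Since $\cosh r-\sinh r\cos\alpha>0$, taking absolute values inside the integral kills the factor $|\,\cdot\,|^{-i\lambda}$. Hence $|\varphi_\lambda(r)|\le\varphi_0(r)$. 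Finally, $\varphi_0\le1$ follows from Cauchy--Schwarz together with $\frac1{2\pi}\int(\cosh r-\sinh r\cos\alpha)^{-1}\dd\alpha=1$, which is the case $\varphi_{i/2}\equiv1$.

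For part (2), I would apply stationary phase to the same integral. Write the integrand as $e^{-i\lambda\Phi(\alpha)}a(\alpha)$ with
\[
\Phi(\alpha)=\log(\cosh r-\sinh r\cos\alpha),\qquad a(\alpha)=(\cosh r-\sinh r\cos\alpha)^{-1/2}.
\]
The phase has exactly two critical points, $\alpha=0$ and $\alpha=\pi$, where $\Phi=-r$ and $\Phi=r$ respectively. Their second derivatives are $\Phi''(0)=\sinh r\,e^{r}$ and $\Phi''(\pi)=-\sinh r\,e^{-r}$. I would insert a smooth partition of unity in $\alpha$ adapted to each critical point.

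Near $\alpha=\pi$, I would rescale $\alpha-\pi=\beta\,(\sinh r\,e^{-r})^{-1/2}$ in the large-$r$ regime. The amplitude there is of size $e^{-r/2}$. After rescaling, the oscillatory integral has effective frequency $\lambda$ times an $O(1)$ phase, provided $\lambda r\gtrsim1$. In the small-$r$ regime the phase is approximately $r\cos\alpha$, so the effective frequency is $\lambda r$. In both regimes the resulting contribution is $e^{i\lambda r}$ times a symbol of size $\lambda^{-1/2}(\sinh r)^{-1/2}$.

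The $\alpha=0$ point is handled in the same way. It contributes $e^{-i\lambda r}$ times a symbol of the same size. Its amplitude is larger, of order $e^{r/2}$, but it is localized to a window of width $e^{-r}(\sinh r)^{-1/2}$.

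Away from the critical points, I would integrate by parts repeatedly using the nonvanishing of $\Phi'$. This gives $O((\lambda r)^{-K})$ times the same envelope, which can be absorbed into either $b_\pm$. Multiplying by $\lambda\tanh(\pi\lambda)$, which is a symbol of order $1$ on $\lambda\ge1$, yields the claimed size $\lambda^{1/2}(\sinh r)^{-1/2}$.

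For the derivative bounds, I would define $b_+=e^{-i\lambda r}\times(\text{localized integral near }\pi)$, and similarly $b_-$. Then $b_+$ is an oscillatory integral with phase $\lambda(\Phi(\alpha)-r)$, which vanishes quadratically at the critical point. Each $\partial_\lambda$ brings down a factor $\Phi-r=O((\alpha-\pi)^2\sinh r\,e^{-r})$. Via the standard identity $(\Phi-r)\approx\tfrac12\Phi''(\pi)(\alpha-\pi)^2$, followed by one integration by parts, this factor is worth $\lambda^{-1}$. Iterating gives $\lambda^{-j}$, which is \eqref{eq:b-symbol}.

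The main obstacle is making all constants uniform simultaneously in the two regimes: $r$ small with $\lambda r\ge c$, and $r$ large. The natural scalings differ in the two cases (effective parameter $\lambda r$ versus $\lambda$), and the $\alpha=0$ point has a degenerating window as $r\to\infty$. I would handle this by fixing the rescaled variable at each critical point so that the rescaled phase and amplitude lie in a compact family of smooth functions, uniformly in $r\in[c/\lambda,\infty)$. Uniform stationary phase with symbol bounds, as in H\"ormander, would then apply. The enlarged region $\lambda r\ge c$ only changes constants through the lower bound on the effective frequency.
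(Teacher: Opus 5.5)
\textbf{Part (1) and bounded $r$.} Your part (1) is the paper's argument. For part (2) you take a different route from the paper, which uses the Mehler--Dirichlet formula for $r\le1$ and, for $r\ge1$, Jost solutions of the radial equation together with the exact $\mathbf c$-function connection formula. In any bounded range of $r$ your stationary-phase argument is sound. There $\Phi=r\psi_r$ with $\psi_r$ a smooth family ($\psi_0=-\cos\alpha$), the effective parameter is $\mu=\lambda r$, and $\partial_\lambda^j=r^j\partial_\mu^j$ turns order $-\tfrac12$ symbol bounds in $\mu$ into \eqref{eq:b-symbol}. The point $\alpha=\pi$ is also fine for all $r$, since $\Phi(\pi+\gamma)=r+\log\bigl(1-(1-e^{-2r})\sin^2(\gamma/2)\bigr)$ is a compact family. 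Two minor slips: $\Phi(0)=-r$, so $e^{-i\lambda\Phi}$ carries $e^{+i\lambda r}$ at $\alpha=0$ and $e^{-i\lambda r}$ at $\alpha=\pi$, and your labels are swapped. Also, the natural window at $\alpha=0$ has width $(e^r\sinh r)^{-1/2}\approx\sqrt2\,e^{-r}$.

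\textbf{The gap: $\alpha=0$ for large $r$.} The compact-family rescaling you propose does not exist there. With $\beta=\sqrt{e^{2r}-1}\,\sin(\alpha/2)$ one has $\cosh r-\sinh r\cos\alpha=e^{-r}(1+\beta^2)$. For a fixed cutoff $m$ with $m(0)=1$, the piece near $\alpha=0$ is
\[
 \frac{e^{i\lambda r}}{\pi\sqrt{2\sinh r}}
 \int_\R(1+\beta^2)^{-1/2-i\lambda}\,m\bigl(\beta/\sqrt{e^{2r}-1}\bigr)\dd\beta .
\]
The rescaled domain is $|\beta|\lesssim e^r$, and the amplitude $(1+\beta^2)^{-1/2}$ is not integrable. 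In the original variable, on the zone $e^{-r}\lesssim|\alpha|\lesssim1$ you have $\Phi'\approx2/\alpha$ and $|a|\approx\sqrt2(\sinh r)^{-1/2}|\alpha|^{-1}$. The phase $\Phi$ sweeps from about $-r$ to about $r$, and $\int|a|\dd\alpha\approx r(\sinh r)^{-1/2}$. This zone is what produces $\varphi_0(r)\asymp re^{-r/2}$, so the $\alpha=0$ contribution is not localized to its window.

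Two of your steps fail on this zone.
\begin{enumerate}
\item Integrating by parts on dyadic pieces returns amplitudes of the same order $-1$ in $\alpha$. This yields only $r\lambda^{-K}(\sinh r)^{-1/2}$, not $O((\lambda r)^{-K})$ times the envelope.
\item More seriously, whichever $b_\pm$ receives part of the zone, $\partial_\lambda^j$ brings down $(\Phi\mp r)^j$, which reaches $(2r)^j$. Take the case where all of $|\alpha|\le\alpha_0$ goes to $b_+$. One computes $b_+=\frac{\lambda\tanh(\pi\lambda)}{\pi\sqrt{2\sinh r}}\bigl[\pi\mathbf c(\lambda)+e^{-2i\lambda r}\sigma_r(\lambda)\bigr]$ with $\sigma_r\not\equiv0$. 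Then $\partial_\lambda^jb_+$ contains $(-2ir)^j e^{-2i\lambda r}\sigma_r(\lambda)$ times this prefactor, which at fixed $\lambda$ has size about $r^j(\sinh r)^{-1/2}$. This contradicts \eqref{eq:b-symbol} uniformly in $r$, so your ``each $\partial_\lambda$ is worth $\lambda^{-1}$'' step breaks down.
\end{enumerate}

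\textbf{What is missing.} You need to use cancellation across the whole zone and then reassign its far end to the other phase. Choose $m\equiv1$ near $0$ and complete the integral to $\R$. For $\lambda\ne0$ the full-line integral converges conditionally to $\sqrt\pi\,\Gamma(i\lambda)/\Gamma(\tfrac12+i\lambda)=\pi\mathbf c(\lambda)$; this recovers the paper's connection coefficient. The difference is supported in $|\beta|\gtrsim e^r$. There it equals $e^{-2i\lambda r}$ times a function whose $\lambda$-derivatives all decay rapidly, uniformly in $r\ge1$, so it belongs to $b_-$. You would still have to prove $|\partial_\lambda^j\mathbf c(\lambda)|\lesssim\lambda^{-1/2-j}$. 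Alternatively, for $r\ge1$ switch to the paper's Jost-series argument, where the amplitudes are symbols uniformly in $r$ by construction.
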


The bound by one in the first part can be checked directly. Put $P_r(\alpha)=(\cosh r-\sinh r\cos\alpha)^{-1}$. Its average is one, since
\[
 \frac1{2\pi}\int_0^{2\pi}\frac{\dd\alpha}{\cosh r-\sinh r\cos\alpha}
 =\bigl(\cosh^2r-\sinh^2r\bigr)^{-1/2}=1.
\]
Taking absolute values in \eqref{eq:boundary-spherical} and using Cauchy--Schwarz gives
\[
 |\varphi_\lambda(r)|\le\frac1{2\pi}\int_0^{2\pi}P_r(\alpha)^{1/2}\dd\alpha\le1.
\]
This argument also covers bounded real $\lambda$, which will be used for the low-frequency cutoff.

The two-phase expansion is uniform both near the diagonal and at large distance. A proof using the Mehler--Dirichlet formula for $r\le1$ and the radial differential equation for $r\ge1$ is given in Appendix~\ref{app:spherical}.

\begin{lemma}[Frequency-localized hyperbolic dispersion]\label{lem:H-kernel}
Let $\psi\in C_c^\infty((a,b))$, where $0<a<b<\infty$. For $L\ge2$, every $K\ge0$, and $0<|t|\le1$,
\begin{equation}\label{eq:H-kernel}
 \norm[{L^1(\HH)\to L^\infty(\HH)}]{e^{it\Delta_H}\psi(\DHyper/L)}
 \le C_{\psi,K}|t|^{-1}(1+L|t|)^{-K}.
\end{equation}
The same assertion, with $L=1$, holds for a fixed smooth cutoff supported in a bounded spectral interval containing zero.
\end{lemma}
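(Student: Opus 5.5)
The operator $e^{it\Delta_H}\psi(\DHyper/L)$ is radial, and by \eqref{eq:spherical-inversion} its kernel is
\[
 k_t(r)=c_H e^{-it/4}\int_0^\infty e^{-it\lambda^2}\psi(\lambda/L)\varphi_\lambda(r)\lambda\tanh(\pi\lambda)\dd\lambda ,
\]
since $-\Delta_H=\DHyper^2+\tfrac14$. The $L^1\to L^\infty$ norm is $\sup_r|k_t(r)|$, so it suffices to bound this oscillatory integral uniformly in $r$. The integrand is supported in $\lambda\in(aL,bL)$, so $\lambda\asymp L\ge2$ and Lemma~\ref{lem:spherical} applies. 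The phase $t\lambda^2$ has $\lambda$-derivative $2t\lambda\asymp tL$. We split into $Lr\le c$ and $Lr\ge c$ for a small fixed $c$; on the second region $\lambda r\ge ca$ and the two-phase expansion \eqref{eq:spherical-symbol} is available.

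\textbf{Near region $Lr\le c$.} I would not expand $\varphi_\lambda$. Instead I would use \eqref{eq:boundary-spherical} to write $\varphi_\lambda(r)=\frac1{2\pi}\int_0^{2\pi}A_\alpha(r)^{-1/2}e^{-i\lambda\log A_\alpha(r)}\dd\alpha$ with $A_\alpha=\cosh r-\sinh r\cos\alpha$. Then $|\log A_\alpha|\lesssim r$, and the total phase in $\lambda$ is $t\lambda^2+\lambda\log A_\alpha$.

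If $L|t|\le1$, the trivial bound $|\varphi_\lambda|\le1$ gives $|k_t|\lesssim L^2\lesssim |t|^{-1}(L|t|)^{-1}$. Since $L^2\le |t|^{-1}\cdot L\le|t|^{-1}L$ is not quite enough, I would note that $L^2=|t|^{-1}\cdot L^2|t|$. Thus in the regime $L|t|\le1$ the required bound $|t|^{-1}(1+L|t|)^{-K}\asymp|t|^{-1}$ follows from $L^2\le|t|^{-1}$, provided additionally $L^2|t|\le1$. When $L^{-2}\le|t|\le L^{-1}$, I would instead use stationary phase in $\lambda$, noting the second derivative of the phase is $2t$. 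This gives $\bigl|\int\bigr|\lesssim |t|^{-1/2}\cdot L\cdot(\text{symbol size})$, which is worse. So the near-region bound in this middle range must come from a van der Corput estimate giving $L\cdot|t|^{-1/2}\le|t|^{-1}$, which holds exactly when $L|t|^{1/2}\le|t|^{-1/2}$, i.e.\ $L|t|\le1$. Good.

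If $L|t|\ge1$, then $|\partial_\lambda(\text{phase})|\ge 2|t|aL-Cr\gtrsim|t|L$, which holds once $c$ is small relative to $a$ and $|t|L\ge1$. Repeated integration by parts, each step gaining $(|t|L\cdot L)^{-1}$ against symbol derivatives of size $L^{-1}$, gives $L^2(L^2|t|)^{-K}\lesssim|t|^{-1}(L|t|)^{-K}$.

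\textbf{Far region $Lr\ge c$.} Inserting \eqref{eq:spherical-symbol}, the phases become $t\lambda^2\mp\lambda r$ with amplitude $\psi(\lambda/L)b_\pm$. This amplitude is of size $L^{1/2}(\sinh r)^{-1/2}$, and each $\lambda$-derivative costs $L^{-1}$ by \eqref{eq:b-symbol}.

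\begin{itemize}
\item For the $+$ sign, or whenever $|r-2t\lambda|\gtrsim |t|L+r$ on the support, integration by parts gives $L\cdot L^{1/2}(\sinh r)^{-1/2}\,(L(|t|L+r))^{-K}$, which is acceptable.
\item In the stationary case $r\asymp |t|L$, van der Corput with second derivative $2|t|$ gives $|t|^{-1/2}L^{1/2}(\sinh r)^{-1/2}$. Using $\sinh r\ge r\asymp|t|L$ this is $\lesssim|t|^{-1}$. Since $|t|\le1$ forces $r\lesssim L$, the gain $(1+L|t|)^{-K}$ must come from $(\sinh r)^{-1/2}\approx e^{-r/2}$ when $r\gtrsim1$.
\end{itemize}

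\textbf{Main obstacle.} This last point is the real difficulty: the estimate for $L|t|\gg1$ with decay $(L|t|)^{-K}$ at the stationary point $r\approx2\lambda t$ requires exponential decay of $(\sinh r)^{-1/2}$, which is polynomial only when $r\lesssim1$. For $r\lesssim1$ we have $L|t|\lesssim1$, so no decay is needed. For $r\gtrsim1$, $e^{-r/2}$ with $r\asymp L|t|$ supplies any power of $(L|t|)^{-1}$.

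I would check carefully that the transition regions $r\approx1$ and $Lr\approx c$, together with the uniformity of \eqref{eq:b-symbol} in $r$, close the argument. The case $L=1$ is handled the same way: only the region $\lambda\le b$ occurs, and the trivial bound $|\varphi_\lambda|\le1$ suffices for $|t|\ge$ const, while $|t|\le1$ requires the small-$t$ bound $|t|^{-1}$, which follows from boundedness of the integrand.
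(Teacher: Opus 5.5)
Your overall structure matches the paper's, and your far-region argument is fine:
\begin{itemize}
\item the boundary-integral representation when $r\lesssim 1/L$;
\item the two-phase expansion with integration by parts off the stationary set;
\item van der Corput on $r\asymp|t|L$, with the decay $(1+L|t|)^{-K}$ coming from the exponential growth of $\sinh r$.
\end{itemize}
The gap is in the near region $Lr\le c$ when $L^{-2}<|t|\le L^{-1}$. There van der Corput gives $|k_t(r)|\lesssim L|t|^{-1/2}$. But $L|t|^{-1/2}\le|t|^{-1}$ is equivalent to $L^2|t|\le1$, not to $L|t|\le1$; your step through ``$L|t|^{1/2}\le|t|^{-1/2}$'' is an algebra slip. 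At $|t|=L^{-1}$ the van der Corput bound exceeds the target $|t|^{-1}$ by a factor $L^{1/2}$. So nothing you wrote covers this range; the van der Corput step only reproduces the range where the trivial bound already works.

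The repair is already in your own integration-by-parts step, because the lower bound on the phase derivative does not need $L|t|\ge1$. Since $|\log A_\alpha(r)|\le r\le c/L$ and $\lambda\ge aL$, one has
\[
 |2t\lambda+\log A_\alpha(r)|\ge 2a|t|L-c/L\ge a|t|L
\]
as soon as $|t|L^2\ge c/a$. So you should integrate by parts, in $\mu=\lambda/L$ and uniformly in $\alpha$, throughout $|t|L^2\ge A_0$. This gives $L^2(|t|L^2)^{-M}$.
\begin{itemize}
\item With $M=1$ this is $|t|^{-1}$, which is all that is needed when $L|t|\le1$.
\item With $M\ge K+1$ and $|t|L^2\ge|t|L\ge1$ it gives $|t|^{-1}(L|t|)^{-K}$.
\end{itemize}
The trivial bound $L^2\lesssim|t|^{-1}$ covers $|t|L^2\le A_0$. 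In other words, the near region must be split at $|t|L^2\asymp1$ rather than at $|t|L\asymp1$, which is exactly how the paper organizes it.

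A minor point: Lemma~\ref{lem:spherical} requires $\lambda\ge1$, and $\lambda\ge aL$ need not be $\ge1$ when $a<1/2$. The bounded range $2\le L<1/a$ should therefore be dispatched by the uniformly bounded kernel, as the paper does.
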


\begin{proof}
Complex conjugation reduces the proof to $t>0$. The harmless factor $e^{-it/4}$ will be omitted. By \eqref{eq:spherical-inversion}, the kernel is
\begin{equation}\label{eq:kL}
 k_L(t,r)=c_H\int_0^\infty e^{-it\lambda^2}
 \psi(\lambda/L)\varphi_\lambda(r)\lambda\tanh(\pi\lambda)\dd\lambda.
\end{equation}
The bounded range of $L$ for which the spectral support meets $[0,1]$ is handled directly: spectral inversion gives a uniformly bounded kernel, and $0<t\le1$ while $L$ stays bounded. We may therefore assume $\lambda\ge1$ on the spectral support. Fix $C_0\ge2/a$, choose $0<c<a$, and choose $A_0\ge\max\{1,2/(ac)\}$. The bound $|\varphi_\lambda|\le1$ gives $|k_L|\lesssim L^2$. If $tL^2\le A_0$, then $Lt$ is bounded and this proves \eqref{eq:H-kernel}.

Assume henceforth $tL^2>A_0$. In the region $r<ctL$, both phases $-t\lambda^2\pm\lambda r$ are nonstationary on the spectral support, with first derivative bounded below by a constant multiple of $tL$.

First suppose $r\le C_0/L$. In \eqref{eq:boundary-spherical}, put
\[
 B(r,\alpha)=\log(\cosh r-\sinh r\cos\alpha).
\]
Then $|B|\le r$, and the phase in the $\lambda$ integral is $-t\lambda^2-\lambda B$. After the substitution $\lambda=L\mu$, its first derivative has size at least $ctL^2$. All derivatives of the cutoff amplitude are uniformly controlled after its overall factor $L^2$ is removed, and $e^{-B/2}$ is bounded in this region. Repeated integration by parts therefore yields
\begin{equation}\label{eq:H-nonstat-small}
 |k_L(t,r)|\le C_M L^2(tL^2)^{-M}.
\end{equation}
If instead $r>C_0/L$, insert \eqref{eq:spherical-symbol}. The same rescaling and integration by parts give
\begin{equation}\label{eq:H-nonstat-large}
 |k_L(t,r)|\le C_M L^{3/2}(\sinh r)^{-1/2}(tL^2)^{-M}
 \lesssim L^2(tL^2)^{-M}.
\end{equation}
To make the integration-by-parts constants explicit, put
\[
 A=tL^2,\qquad \Phi_\pm(\mu)=-A\mu^2\pm Lr\mu.
\]
On the support of $\psi$, the inequalities $r<ctL$ and $0<c<a$ give
\[
 |\Phi_\pm'(\mu)|\ge(2a-c)A,
 \qquad |\Phi_\pm''(\mu)|=2A.
\]
Every derivative of $(\Phi_\pm')^{-1}$ is consequently bounded by
$C_jA^{-1}$.  If $a_L$ is supported in the same fixed compact interval,
the formal transpose of
\[
 \mathcal L_\pm=\frac1{i\Phi_\pm'(\mu)}\partial_\mu
\]
satisfies
\[
 \| (\mathcal L_\pm^*)^M a_L\|_{L^1_\mu}
 \le C_M A^{-M}\sum_{j=0}^M\|\partial_\mu^j a_L\|_{L^1_\mu}.
\]
There are no boundary terms because $\psi$ is compactly supported in
$(a,b)$.  For the two-phase representation, its rescaled amplitude obeys
\[
 \sum_{j=0}^M
 \|\partial_\mu^j[L\psi(\mu)b_\pm(L\mu,r)]\|_{L^1_\mu}
 \lesssim_M L^{3/2}(\sinh r)^{-1/2}.
\]
For the boundary integral, the corresponding amplitude after its overall
factor $L^2$ is removed is
$\psi(\mu)\mu\tanh(\pi L\mu)e^{-B(r,\alpha)/2}$.
Its $\mu$ derivatives are bounded uniformly when $r\le C_0/L$.
The same calculation with
$\Phi(\mu)=-A\mu^2-LB(r,\alpha)\mu$ applies, since
$|B(r,\alpha)|\le r<ctL$.  This proves both nonstationary estimates with
constants independent of $L,t,r,\alpha$.

By choosing $M\ge K+1$, these estimates are bounded by the right side of \eqref{eq:H-kernel}. Indeed, when $Lt\ge1$, one uses $Lt\le tL^2$; when $Lt\le1$, the desired additional factor is bounded below by a constant.

It remains to consider $r\ge ctL$. Our assumption on $tL^2$ guarantees $\lambda r\gtrsim1$ on the spectral support. The second derivative of $-t\lambda^2\pm\lambda r$ is $-2t$. For the stationary-region estimate, let
$a_\pm(\lambda)=\psi(\lambda/L)b_\pm(\lambda,r)$.
The symbol bounds give
\[
 \|a_\pm\|_{L^\infty_\lambda}
 +\|a_\pm'\|_{L^1_\lambda}
 \lesssim L^{1/2}(\sinh r)^{-1/2}.
\]
The assumption $tL^2>A_0$ and $r\ge ctL$ ensures
$\lambda r\ge acA_0\ge2$ throughout the spectral support, so the
two-phase representation is valid on the whole integration interval.

The second-derivative van der Corput estimate (see \cite[Chapter VIII]{Stein}), together with \eqref{eq:b-symbol}, gives
\begin{equation}\label{eq:H-vdc}
 |k_L(t,r)|\lesssim t^{-1/2}L^{1/2}(\sinh r)^{-1/2}
 =t^{-1}(Lt)^{1/2}(\sinh r)^{-1/2}.
\end{equation}
For $Lt\le1$, use $\sinh r\ge r\ge ctL$. For $Lt\ge1$, use $r\ge ctL$ and the exponential growth of $\sinh r$ to obtain
\[
 (Lt)^{1/2}(\sinh r)^{-1/2}\le C_K(1+Lt)^{-K}.
\]
This proves \eqref{eq:H-kernel}. A fixed low-frequency cutoff has a uniformly bounded kernel by \eqref{eq:spherical-inversion}, so the assertion for $L=1$ follows for $0<t\le1$ after changing $C_K$.
\end{proof}

\section{Quadratic sums and time integration}

Quadratic sums enter periodic Schr\"odinger restriction estimates already in \cite{Bourgain93}. We prove the precise smooth bound and weighted time integral required here, keeping track of the numerator count for arcs meeting a short initial interval. The arithmetic estimates are stated in variables for which the time period is one. Write $e(v)=e^{2\pi i v}$. For $a_0\in C_c^\infty(\R)$ define
\begin{equation}\label{eq:W-def}
 W_N(\tau,y)=\sum_{n\in\Z}a_0(n/N)e(ny-n^2\tau),
 \qquad A_N(\tau)=\sup_{y\in\R}|W_N(\tau,y)|.
\end{equation}
All constants below may depend on a fixed finite collection of seminorms of $a_0$ and on its support.

\begin{lemma}[Smooth quadratic sum]\label{lem:weyl}
Suppose $N\ge2$, $(a,q)=1$, $1\le q\le N$, and
\[
 \tau=a/q+\beta,\qquad |\beta|\le(qN)^{-1}.
\]
Then
\begin{equation}\label{eq:weyl}
 A_N(\tau)\le C\frac{N}{\sqrt q\sqrt{1+N^2|\beta|}}.
\end{equation}
\end{lemma}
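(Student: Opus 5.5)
We prove \eqref{eq:weyl} by Poisson summation in residue classes modulo $q$. This is the standard route for smooth Weyl sums near a rational, and it yields the Gauss-sum factor $q^{-1/2}$ together with the dispersive factor $(1+N^2|\beta|)^{-1/2}$.

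\textbf{Step 1: split into residue classes.} Write $n=qm+b$ with $0\le b<q$. Since $e(-n^2a/q)=e(-b^2a/q)$ depends only on $b$,
\[
W_N(\tau,y)=\sum_{b=0}^{q-1}e(-ab^2/q)\sum_{m\in\Z}g(qm+b),
\qquad g(u)=a_0(u/N)e(uy-u^2\beta).
\]

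\textbf{Step 2: Poisson summation in $m$.} Applying Poisson summation to the inner sum and then carrying out the sum over $b$ gives
\[
W_N(\tau,y)=\frac1q\sum_{k\in\Z}S(a,k;q)\,\widehat g(k/q),
\qquad S(a,k;q)=\sum_{b\bmod q}e\bigl((-ab^2+kb)/q\bigr),
\]
where $\widehat g(\xi)=\int g(u)e(-u\xi)\dd u$.

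\textbf{Step 3: the Gauss sum.} For $(a,q)=1$ we have $|S(a,k;q)|\le\sqrt{2q}$ uniformly in $k$. This follows from the usual squaring argument: $|S|^2$ reduces to counting solutions of $2ah\equiv0\pmod q$, and there are at most two of them. Hence
\[
A_N(\tau)\lesssim q^{-1/2}\sup_y\sum_k|\widehat g(k/q)|.
\]

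\textbf{Step 4: the oscillatory integrals.} This step contains the real work. After rescaling $u=Nv$,
\[
\widehat g(\xi)=N\int a_0(v)\,e\bigl(N v(y-\xi)-N^2\beta v^2\bigr)\dd v.
\]
The phase has second derivative of size $N^2|\beta|$. Van der Corput's second-derivative estimate, combined with the trivial bound, therefore gives
\[
|\widehat g(\xi)|\lesssim N(1+N^2|\beta|)^{-1/2}.
\]
This bound is needed for only $O(1)$ values of $k$. For the remaining $k$ the phase derivative $N(y-k/q)-2N^2\beta v$ is large, and the term must be shown to be negligible.

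The hypothesis $|\beta|\le(qN)^{-1}$ gives $|2N^2\beta v|\lesssim N/q$ on $\supp a_0$. The points $k/q$ are spaced by $1/q$, so the quantity $N(y-k/q)$ moves in steps of $N/q\ge1$. Hence only $O(1)$ values of $k$ satisfy $|N(y-k/q)|\le C N/q$, and these are the $k$ near the stationary range. For every other $k$ the first derivative of the phase is at least $c\,N|y-k/q|\gtrsim N/q\cdot|k-k_0|$, where $k_0$ is the stationary index. Repeated integration by parts then gives
\[
|\widehat g(k/q)|\lesssim_M N\,(N|k-k_0|/q)^{-M}.
\]
Since $N/q\ge1$, this is summable in $k$ to $O(N(1+|k-k_0|)^{-2})$.

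This tail bound must be compared with $N(1+N^2|\beta|)^{-1/2}$. We have $N^2|\beta|\le N/q$, so $(1+N^2|\beta|)^{-1/2}\ge(1+N/q)^{-1/2}$. Choosing $M\ge1$, the tail is therefore
\[
\lesssim N(N/q)^{-1}\le N(1+N/q)^{-1/2}\cdot O(1)\le N(1+N^2|\beta|)^{-1/2}.
\]

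\textbf{Main obstacle.} The delicate point is the bookkeeping in Step 4: the stationary window must contain $O(1)$ values of $k$ uniformly in $y$, and the non-stationary tails must be bounded by the van der Corput bound rather than by the trivial bound $N$. Both facts rest on the Dirichlet-type condition $|\beta|\le(qN)^{-1}$ and on $q\le N$. All integration-by-parts constants depend only on finitely many seminorms of $a_0$ and on its support, as the lemma requires. Combining Steps 3 and 4 yields \eqref{eq:weyl}.
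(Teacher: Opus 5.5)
Your proof is correct and is essentially the paper's argument. The paper also applies Poisson summation after expanding $e(-an^2/q)$ modulo $q$, which is equivalent to your residue-class splitting, and it bounds the Gauss sums by $\sqrt{2q}$ with the same squaring argument; it then uses that the samples $N(y-k/q)$ have spacing $N/q\ge1$ while $N^2|\beta|\le N/q$, so van der Corput handles $O(1)$ near-stationary terms and integration by parts handles the tail. One slip is cosmetic: the tail sum over $k$ needs $M\ge2$ (not $M\ge1$) to converge, after which $(N/q)^{-M}\le(N/q)^{-1}$ gives your comparison.
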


\begin{proof}
Expand the periodic sequence $e(-an^2/q)$ in a finite Fourier series modulo $q$. Its Fourier coefficients have absolute value at most $\sqrt{2/q}$. To verify this, the corresponding unnormalized Gauss sum is
\[
 G(a,m;q)=\sum_{r\bmod q}e((-ar^2-mr)/q).
\]
On writing $r=s+h$ in $|G|^2$, the sum over $s$ vanishes unless $q\mid2ah$. Since $(a,q)=1$, at most two residue classes $h$ contribute, and $|G|^2\le2q$.

To apply Poisson summation to each term, set
\[
 b=N^2\beta,\qquad I_b(v)=\int_\R a_0(z)e(vz-bz^2)\dd z.
\]
More explicitly, the finite Fourier expansion is
\[
 e(-an^2/q)=\sum_{m=0}^{q-1}c_m e(mn/q),
 \qquad |c_m|\le\sqrt{2/q}.
\]
Poisson summation then gives
\[
 W_N(\tau,y)
 =N\sum_{m=0}^{q-1}c_m\sum_{\ell\in\Z}
 I_b\bigl(N(y+m/q-\ell)\bigr).
\]
As $(m,\ell)$ varies, the integer $k=q\ell-m$ runs through $\Z$ exactly
once.  Taking absolute values gives the asserted single sum over $k$.

Consequently,
\begin{equation}\label{eq:poisson-sum}
 |W_N(\tau,y)|\le C\frac{N}{\sqrt q}
 \sum_{k\in\Z}\left|I_b\bigl(N(y-k/q)\bigr)\right|.
\end{equation}
The van der Corput estimate, with the trivial bound when $|b|\le1$, gives
\begin{equation}\label{eq:Ib-core}
 |I_b(v)|\le C(1+|b|)^{-1/2}.
\end{equation}
For $|v|\ge C_0(1+|b|)$, the phase derivative $v-2bz$ is bounded away from zero on the support of $a_0$. Repeated integration by parts gives, for every integer $M\ge1$,
\begin{equation}\label{eq:Ib-tail}
 |I_b(v)|\le C_M(1+|v|)^{-M}.
\end{equation}
The sample points in \eqref{eq:poisson-sum} have spacing $S=N/q\ge1$, and the major-arc condition implies $|b|\le S$. Consequently, only a bounded number of samples belong to $|v|\le C_0(1+|b|)$. Their contribution is bounded by \eqref{eq:Ib-core}. For the remaining samples, \eqref{eq:Ib-tail} and $S\ge1$ imply
\[
 \sum_{|v_k|>C_0(1+|b|)}(1+|v_k|)^{-M}
 \lesssim(1+|b|)^{1-M}.
\]
Taking $M\ge2$ bounds the full sample sum by $C(1+|b|)^{-1/2}$. Inserting this in \eqref{eq:poisson-sum} proves the lemma.
\end{proof}

\begin{lemma}[Weighted arithmetic integral]\label{lem:arithmetic}
For $N\ge2$ and $0<T\le1/2$,
\begin{equation}\label{eq:J-bound}
 J_N(T):=\int_0^T\tau^{-2/3}A_N(\tau)^{2/3}\dd\tau
 \le C\bigl(\log(2N)+(NT)^{1/3}\bigr).
\end{equation}
Consequently, for $L\ge1$ and any fixed sufficiently large $K$,
\begin{equation}\label{eq:weighted-J}
 \int_0^{1/2}\tau^{-2/3}(1+L\tau)^{-K}A_N(\tau)^{2/3}\dd\tau
 \lesssim \log(2N)+(N/L)^{1/3}.
\end{equation}
\end{lemma}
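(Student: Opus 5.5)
We prove \eqref{eq:J-bound} with a Dirichlet decomposition of $(0,T]$ into major arcs, apply Lemma~\ref{lem:weyl} on each arc, and then sum over the arcs. By Dirichlet's theorem, every $\tau\in(0,1/2]$ has a representation $\tau=a/q+\beta$ with $1\le q\le N$, $(a,q)=1$, $|\beta|\le(qN)^{-1}$. Lemma~\ref{lem:weyl} then gives
\[
A_N(\tau)^{2/3}\lesssim N^{2/3}q^{-1/3}(1+N^2|\beta|)^{-1/3}.
\]
Since $0\le a\le q$ and $\tau\le T$, the relevant fractions are $a/q\lesssim T$, apart from the arc at $a=0$.

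\emph{The arc at $a=0$, $q=1$.} On $0<\tau\le N^{-1}$ we have
\[
\tau^{-2/3}A_N^{2/3}\lesssim \tau^{-2/3}N^{2/3}(1+N^2\tau)^{-1/3}.
\]
For $\tau\le N^{-2}$ we use the trivial bound $A_N\lesssim N$; the integrand is then $\lesssim N^{2/3}\tau^{-2/3}$, and the integral is $\lesssim N^{2/3}N^{-2/3}=1$. For $N^{-2}\le\tau\le N^{-1}$ the integrand is $\lesssim\tau^{-1}$, which contributes $\log N$. This arc is the source of the logarithm.

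\emph{Arcs with $a\ge1$.} Here $\tau\asymp a/q$, so $\tau^{-2/3}\lesssim(q/a)^{2/3}$. Integrating in $\beta$ gives
\[
\int_{|\beta|\le 1/(qN)}(1+N^2|\beta|)^{-1/3}\,d\beta\lesssim N^{-2}\bigl(N/q\bigr)^{2/3}=N^{-4/3}q^{-2/3}.
\]
Each arc therefore contributes $\lesssim (q/a)^{2/3}N^{2/3}q^{-1/3}N^{-4/3}q^{-2/3}=N^{-2/3}q^{-1/3}a^{-2/3}$. We sum over $1\le q\le N$ and $1\le a\le \min(q,CqT)$. For $q\le T^{-1}$ the constraint $a/q\lesssim T$ forces $a\lesssim qT\le 1$, so only $a=1$ (and boundedly many $a$) occurs; these $q$ contribute $N^{-2/3}\sum_{q\le N}q^{-1/3}\lesssim1$. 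For $q>T^{-1}$ we have $\sum_{a\lesssim qT}a^{-2/3}\lesssim(qT)^{1/3}$, which contributes
\[
N^{-2/3}\sum_{q\le N}q^{-1/3}(qT)^{1/3}\lesssim N^{1/3}T^{1/3}=(NT)^{1/3}.
\]
One more point needs checking. The arcs overlap boundedly: arcs for distinct fractions with $q\le N$ of width $(qN)^{-1}$ may overlap, but the standard Farey argument shows each $\tau$ lies in $O(1)$ arcs up to a factor $2$ enlargement. Alternatively, we assign each $\tau$ one representation and bound the integral over the set of $\tau$ assigned to $a/q$ by the integral over the whole arc. The latter is what we use, so no overlap issue arises. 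Together the two cases give \eqref{eq:J-bound}; the arc-sum bookkeeping is the main (though routine) step.

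\emph{Deduction of \eqref{eq:weighted-J}.} We decompose $(0,1/2]$ dyadically. On $(0,\min(L^{-1},1/2)]$ the weight is $\le1$, and \eqref{eq:J-bound} with $T=L^{-1}$ gives $\log(2N)+(N/L)^{1/3}$. On $(2^jL^{-1},2^{j+1}L^{-1}]$ with $j\ge0$ the weight is $\lesssim2^{-jK}$, and \eqref{eq:J-bound} with $T=2^{j+1}/L$ (capped at $1/2$) bounds the integral by $2^{-jK}(\log(2N)+2^{j/3}(N/L)^{1/3})$. Summing over $j$ for $K\ge1$ yields \eqref{eq:weighted-J}.
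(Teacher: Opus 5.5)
Your proposal is correct and follows essentially the same route as the paper: Dirichlet arcs with $q\le N$, Lemma~\ref{lem:weyl} on each arc, the zero arc producing $\log(2N)$, the nonzero arcs summed via $\tau\asymp a/q$ and $\sum_{a\lesssim qT}a^{-2/3}\lesssim(qT)^{1/3}$ to give $(NT)^{1/3}$, and a dyadic decomposition in $L\tau$ for \eqref{eq:weighted-J}. The only difference is cosmetic: the paper justifies your claim $a/q\lesssim T$ explicitly through $a\le qT+1/N$, where $a\ge1$ forces $qT\ge1-1/N$, and this also makes your split at $q=T^{-1}$ unnecessary.
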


\begin{proof}
We may assume $N\ge4$, since the remaining values are covered by the trivial bound $A_N\lesssim N$. For integer $N$, the pigeonhole principle applied to the $N+1$ fractional parts $0,\tau,\ldots,N\tau$ gives integers $1\le q\le N$ and $a$ with $|q\tau-a|\le N^{-1}$. Reducing $a/q$ preserves the required inequality because its new denominator is no larger than $q$. Thus Dirichlet approximation covers $[0,T]$ by arcs
\[
 \left|\tau-\frac aq\right|\le\frac1{qN},\qquad
 1\le q\le N,\quad (a,q)=1.
\]
Overlaps cause no difficulty because the integrand is nonnegative and we only seek an upper bound. The zero rational is $a/q=0/1$. On its arc Lemma~\ref{lem:weyl} gives
\[
 A_N(\tau)^{2/3}\lesssim N^{2/3}(1+N^2\tau)^{-1/3}.
\]
The integral over $0<\tau\le N^{-2}$ is $O(1)$. Over $N^{-2}<\tau\le N^{-1}$, the full integrand is $O(\tau^{-1})$. Hence the zero arc contributes $O(\log(2N))$.

On an arc with $a\ge1$, one has $\tau\asymp a/q$, because the relative error is at most $1/(aN)\le1/4$. Moreover, an arc that meets $[0,T]$ satisfies
\[
 a\le qT+1/N,\qquad qT\ge1-1/N,
\]
and therefore $a\le CqT$. By Lemma~\ref{lem:weyl},
\begin{align}\label{eq:arc-integral}
 \int_{|\beta|\le1/(qN)}A_N(a/q+\beta)^{2/3}\dd\beta
 &\lesssim N^{2/3}q^{-1/3}
 \int_0^{1/(qN)}(1+N^2\beta)^{-1/3}\dd\beta\\
 &\lesssim N^{-2/3}q^{-1}.\notag
\end{align}
The last integration can be evaluated explicitly:
\[
 \int_0^{1/(qN)}(1+N^2\beta)^{-1/3}\dd\beta
 =\frac{3}{2N^2}\bigl[(1+N/q)^{2/3}-1\bigr]
 \lesssim N^{-4/3}q^{-2/3},
\]
because $q\le N$.  On an arc meeting $[0,T]$ with $a\ge1$, one has
$qT\ge1-N^{-1}\ge3/4$ and
$a\le qT+N^{-1}\le(4/3)qT$.  Therefore
\[
 \sum_{1\le a\le CqT}a^{-2/3}
 \le C(qT)^{1/3}.
\]
Multiplication by $q^{-1}q^{2/3}$ leaves exactly $CT^{1/3}$ for each
denominator $q$.  Summing the at most $N$ denominators yields
$CN^{1/3}T^{1/3}$ after the prefactor $N^{-2/3}$ is restored.

We may drop the coprimality condition when summing upper bounds. The total contribution of the nonzero arcs is at most
\begin{align*}
 C N^{-2/3}\sum_{q\le N}q^{-1}
       \sum_{1\le a\le CqT}(q/a)^{2/3}
 &\lesssim N^{-2/3}\sum_{q\le N}T^{1/3}\\
 &\lesssim (NT)^{1/3}.
\end{align*}
The inner sum is understood as zero if its index set is empty. This proves \eqref{eq:J-bound}, including $T$ shorter than the smallest nonzero rational arcs.

For the weighted integral, assume first $L\ge2$.  The segment
$0<\tau\le L^{-1}$ contributes at most $J_N(L^{-1})$ (with the endpoint
truncated at $1/2$).  On
\[
 E_j=\{2^j/L\le\tau\le2^{j+1}/L\}\cap(0,1/2],\qquad j\ge0,
\]
the weight is bounded by $C_K2^{-Kj}$.  Positivity of the integrand and
the initial-segment estimate give
\begin{align*}
 \int_{E_j}\tau^{-2/3}(1+L\tau)^{-K}A_N(\tau)^{2/3}\dd\tau
 &\le C_K2^{-Kj}
 J_N\bigl(\min\{2^{j+1}/L,1/2\}\bigr)\\
 &\lesssim_K2^{-Kj}\log(2N)
   +(N/L)^{1/3}2^{-(K-1/3)j}.
\end{align*}
Both geometric series converge for $K>1/3$.  If $1\le L<2$, the
unweighted bound with $T=1/2$ gives the same conclusion.  Thus no lower
bound on the observation length, and no disjointness of the rational
arcs, is required.
\end{proof}

\section{Two-frequency estimates}

\subsection{The duality estimate and its functional setting}

Return to the physical variables $t$ and $\theta$. A periodic cutoff of bandwidth $N$ has propagator kernel
\[
 K_N^{\T}(t,\theta)=\frac1{2\pi}\sum_n\chi(n/N)^2e^{in\theta-itn^2}.
\]
The changes of variables $\tau=t/(2\pi)$ and $y=\theta/(2\pi)$ identify this with \eqref{eq:W-def}, up to a constant. In particular, the time range $0<t\le1$ lies inside the range used in Lemma~\ref{lem:arithmetic}.

\begin{proposition}\label{prop:block}
Suppose that $h\in L^2(M)$ has hyperbolic spectral support in $[L/2,2L]$ and periodic Fourier support in $|n|\le c_0N$. The hyperbolic low-frequency support $[0,2]$ is permitted when $L=1$. For every interval $I$ of length at most one,
\begin{equation}\label{eq:block-log}
 \norm[{\X(I)}]{\U(t)h}
 \le C\left[\log(2N)+(N/L)^{1/3}\right]^{1/2}\norm[{2}]{h}.
\end{equation}
For $L\ge2$, one also has
\begin{equation}\label{eq:block-tail}
 \norm[{\X(I)}]{\U(t)h}\le C N^{1/3}L^{-1/6}\norm[{2}]{h}.
\end{equation}
Constants may depend on $c_0$, but not on $L,N,I$.
\end{proposition}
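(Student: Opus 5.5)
We prove both bounds by the $TT^*$ method. Fix enlarged cutoffs $\tilde p_L$ and $\chi$ with $\tilde p_L(\lambda)=1$ on $[L/2,2L]$ and $\chi(\xi)=1$ on $|\xi|\le c_0$. Put
\[
 T h=\U(t)\tilde p_L(\DHyper)\chi(D_\theta/N)h ,
\]
viewed as a map $L^2(M)\to\X(I)$; this agrees with $\U(t)h$ on the class of $h$ considered. Then
\[
 \norm[{L^2\to\X(I)}]{T}^2=\norm[{\X'(I)\to\X(I)}]{TT^*},
\]
and $TT^*F(t)=\int_I\U(t-s)\tilde p_L^2(\DHyper)\chi^2(D_\theta/N)F(s)\dd s$. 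The propagator factorizes as in \eqref{eq:product}. Hence the kernel of $\U(t)\tilde p_L^2(\DHyper)\chi^2(D_\theta/N)$ on $M$ is the product of the hyperbolic kernel $k_L(t,r)$ and the periodic kernel $K_N^{\T}(t,\theta)$. For $L\ge2$, Lemma~\ref{lem:H-kernel} gives $|k_L(t,r)|\lesssim_K|t|^{-1}(1+L|t|)^{-K}$, and it gives the analogous bound $|t|^{-1}$ when $L=1$. After the change of variables of Section 4, $\sup_\theta|K_N^{\T}(t,\theta)|\lesssim A_N(t/2\pi)$, with $A_N$ built from $a_0=\chi^2$. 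This yields the dispersive estimate
\[
 \norm[{L^1(M)\to L^\infty(M)}]{\U(t)\tilde p_L^2\chi^2}\lesssim |t|^{-1}(1+L|t|)^{-K}A_N(t/2\pi)=:w(t),\qquad 0<|t|\le1 .
\]

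Next we interpolate with the $L^2$ bound (unitarity, norm $\le1$). Riesz--Thorin at $p=6$ gives $L^{6/5}\to L^6$ bounds of size $w(t)^{2/3}$. Consequently
\[
 \norm[{L^6}]{TT^*F(t)}\le\int_I w(t-s)^{2/3}\norm[{L^{6/5}}]{F(s)}\dd s .
\]
The $\X'\to\X$ norm is then bounded by Young's inequality by $\norm[{L^1(-1,1)}]{w^{2/3}}$, since $|t-s|\le1$ for $t,s\in I$. This is exactly why the endpoint $L^2_t$ is harmless here: the time kernel becomes integrable thanks to the arithmetic gain, so no Keel--Tao argument is needed.

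The key step is to evaluate $\int_{-1}^{1}w^{2/3}$. By evenness in $t$ and the substitution $\tau=t/2\pi$, it suffices to bound
\[
 \int_0^{1/(2\pi)}\tau^{-2/3}(1+2\pi L\tau)^{-2K/3}A_N(\tau)^{2/3}\dd\tau .
\]
Since $1/(2\pi)<1/2$, \eqref{eq:weighted-J} with $K$ replaced by $2K/3$ applies and bounds this by $\log(2N)+(N/L)^{1/3}$. This proves \eqref{eq:block-log} after taking square roots. The case $N<2$, or small $c_0N$, is covered by the trivial bound $A_N\lesssim 1$, which keeps the integral $O(1)$.

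For \eqref{eq:block-tail} we instead use the trivial bound $A_N\lesssim N$. This gives
\[
 \int_0^{1}t^{-2/3}(1+Lt)^{-2K/3}N^{2/3}\dd t\lesssim N^{2/3}L^{-1/3},
\]
and hence the operator norm $N^{1/3}L^{-1/6}$. The main care point is the justification of $TT^*$ with the enlarged cutoffs: the multipliers commute and are bounded on $L^2$, and the kernel bound holds pointwise for each fixed $t\ne0$ as a product on $M$. Beyond that, one only has to check that the constants depend on $c_0$ solely through the seminorms of $a_0$.
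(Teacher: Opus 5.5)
Your proposal is correct and follows the paper's proof essentially step by step. Both use $TT^*$ with enlarged real cutoffs, the product kernel bound from Lemma~\ref{lem:H-kernel} and $A_N$, Riesz--Thorin at parameter $2/3$, and Young's inequality in the time difference. Both then apply Lemma~\ref{lem:arithmetic} for the first bound and $A_N\lesssim N$ for the second. The one point the paper handles more carefully is the functional-analytic justification of $TT^*$. The paper bounds $T^*$ on the dense class $L^1(I;L^2)\cap\X'(I)$ and then defines $T$ by duality, whereas you invoke $\|T\|^2=\|TT^*\|$ for an operator not yet known to be bounded into $\X(I)$.
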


\begin{proof}
By time translation, it is enough to consider $I\subset[0,1]$. Choose bounded real smooth multipliers $B_L$ and $C_N$ equal to one on the indicated spectral supports and supported in fixed enlargements. Define
\[
 Tf(t)=\mathbf 1_I(t)\U(t)B_LC_Nf.
\]
The $TT^*$ kernel is $\U(t-s)B_L^2C_N^2$. Its $L^2\to L^2$ norm is bounded uniformly. By the product structure and Lemma~\ref{lem:H-kernel}, its $L^1\to L^\infty$ norm is at most
\[
 C_K|t-s|^{-1}(1+L|t-s|)^{-K}
 A_N\bigl((t-s)/(2\pi)\bigr).
\]
Riesz--Thorin interpolation between these two bounds gives an $L^{6/5}\to L^6$ norm bounded by
\begin{equation}\label{eq:TTkernel}
 C_K|t-s|^{-2/3}(1+L|t-s|)^{-K}
 A_N\bigl((t-s)/(2\pi)\bigr)^{2/3}.
\end{equation}
The interpolation parameter is $2/3$, since
\[
 \frac56=\frac{1-2/3}{2}+\frac23,
 \qquad\frac16=\frac{1-2/3}{2}.
\]
Here the value of $K$ has been renamed. For negative time the same bound holds by conjugation of the kernels.

To make the time argument explicit, extend $F\in\X'(I)$ by zero outside $I$ and set $a(s)=\|F(s)\|_{6/5}$. Write $b_{L,N}(v)$ for the right side of \eqref{eq:TTkernel}, multiplied by $\mathbf1_{\{|v|\le1\}}$. For smooth compactly supported test functions, the kernel formula and Minkowski give
\[
 \|TT^*F(t)\|_6\le\int_\R b_{L,N}(t-s)a(s)\dd s.
\]
Thus Young's convolution inequality yields
\begin{equation}\label{eq:young-TT-detail}
 \|TT^*F\|_{\X(I)}\le\|b_{L,N}\|_{L^1(\R)}\|F\|_{\X'(I)}.
\end{equation} For $N\ge2$, Lemma~\ref{lem:arithmetic}, with the fixed time rescaling, yields
\[
 \norm[{\X'(I)\to\X(I)}]{TT^*}
 \lesssim\log(2N)+(N/L)^{1/3}.
\]
The usual Hilbert-space duality argument gives the square root of this bound for $T:L^2\to\X(I)$. Explicitly,
\[
 \norm[{2}]{T^*F}^2
 =\langle TT^*F,F\rangle
 \le\norm[{\X}]{TT^*F}\norm[{\X'}]{F}.
\]
Here the pairing is the space-time integral, conjugate-linear in the second variable. On the dense class $L^1(I;L^2(M))\cap\X'(I)$ the adjoint is the Bochner integral
\[
 T^*F=B_LC_N\int_I\U(-s)F(s)\dd s.
\]
The preceding inequality bounds this adjoint into $L^2$ by the square root of $\|b_{L,N}\|_1$, so it extends to all of $\X'(I)$. Since $\X(I)$ is reflexive and its dual is $\X'(I)$, duality then defines the desired bounded operator $T$. This also justifies the argument without assuming the Strichartz bound in advance. Its action agrees with the propagator on the initial dense class and hence in distributions. Finally $B_LC_Nh=h$, proving \eqref{eq:block-log}. The general $TT^*$ principle is part of the standard Strichartz framework \cite{KT}; the present calculation uses an integrable time majorant and the elementary form \eqref{eq:young-TT-detail}.

When $N=1$, the same conclusion follows directly from $A_1\lesssim1$ and the integrability of \eqref{eq:TTkernel}. For the second estimate, use $A_N\lesssim N$ in \eqref{eq:TTkernel}. Since
\[
 \int_0^1t^{-2/3}(1+Lt)^{-K}\dd t\lesssim L^{-1/3}
\]
for $K>1/3$, the squared operator norm is $O(N^{2/3}L^{-1/3})$. This proves \eqref{eq:block-tail}.
\end{proof}

\subsection{Translation of the periodic frequency interval}

\begin{corollary}[Matched bandwidth]\label{cor:matched}
Suppose $h$ has hyperbolic frequency $L$ as in Proposition~\ref{prop:block}, and periodic Fourier support in an interval $J$ of length at most $c_0L$, with arbitrary center. Then
\begin{equation}\label{eq:matched}
 \norm[{\X(I)}]{\U(t)h}\lesssim\sqrt{\log(2L)}\norm[{2}]{h}
\end{equation}
for every interval $I$ of length at most one.
\end{corollary}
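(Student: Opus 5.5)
The plan is to reduce Corollary~\ref{cor:matched} to Proposition~\ref{prop:block} with $N\asymp L$ by a Galilean translation in the periodic variable. Let $n_0\in\Z$ be an integer closest to the center of $J$, and write $h=e^{in_0\theta}g$. Then $g$ has periodic Fourier support in $|n|\le c_0L/2+1\le c_0'L$. Multiplication by $e^{in_0\theta}$ commutes with the hyperbolic functional calculus, so $g$ keeps hyperbolic spectral support in $[L/2,2L]$ (or $[0,2]$ if $L=1$), and $\norm[2]{g}=\norm[2]{h}$.

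Next, compute the evolution mode by mode using \eqref{eq:mode-evolution}. If $g=\sum_m g_m(x)e^{im\theta}$, then $h_n=g_{n-n_0}$. Since $(m+n_0)^2=m^2+2mn_0+n_0^2$, we get
\[
 \U(t)h(x,\theta)=e^{-itn_0^2}e^{in_0\theta}\,\bigl(\U(t)g\bigr)(x,\theta-2n_0t).
\]
For each fixed $t$, the map $\theta\mapsto\theta-2n_0t$ is a measure-preserving translation of $\T$. The unimodular prefactors do not affect the modulus. Hence
\[
 \norm[{L^6(M)}]{\U(t)h}=\norm[{L^6(M)}]{\U(t)g}
\]
for every $t$, and the $\X(I)$ norms coincide. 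This identity holds first in $L^2(M)$ for each $t$, and then for the space-time norm by taking the $L^2$ norm in $t$.

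Finally, apply \eqref{eq:block-log} to $g$ with $N=\max\{L,2\}$ and constant $c_0'$ in place of $c_0$. This gives the bound
\[
 C\bigl[\log(2N)+(N/L)^{1/3}\bigr]^{1/2}\norm[2]{g}\lesssim\sqrt{\log(2L)}\,\norm[2]{h},
\]
because $N/L\le 2$ and $\log(2L)\ge\log 2$ is bounded below. The constant depends only on $c_0$ and is uniform in the center of $J$, as required.

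There is no substantial obstacle here. The only point needing care is that the Galilean identity must be applied with an integer shift $n_0$, so that $e^{in_0\theta}$ is well defined on $\T$. The center of $J$ is arbitrary, which is why $n_0$ is taken as the nearest integer; this rounding only costs enlarging $c_0$ slightly.
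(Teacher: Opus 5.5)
Your proposal is correct and follows the paper's proof. Both arguments make an integer Galilean shift $h=e^{in_0\theta}g$, use $\U(t)h=e^{in_0\theta-itn_0^2}\U(t)g(\cdot,\theta-2n_0t)$ together with measure-preserving translation in $\theta$, and then apply \eqref{eq:block-log} at periodic scale comparable to $L$ with an enlarged bandwidth constant. Your choice $N=\max\{L,2\}$ in place of the paper's $N=L$ is harmless, since $(N/L)^{1/3}\le 2^{1/3}$ is absorbed by $\log(2L)\ge\log 2$.
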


\begin{proof}
Choose an integer $n_0$ within distance one of the center of $J$. Writing $h=e^{in_0\theta}g$, one has
\begin{equation}\label{eq:galilean}
 \U(t)h(x,\theta)=e^{in_0\theta-itn_0^2}
 \U(t)g(x,\theta-2n_0t).
\end{equation}
The change in $\theta$ is a measure-preserving translation for each $t$. Apply \eqref{eq:block-log} to $g$ with periodic scale $N=L$, increasing the fixed bandwidth constant if necessary.
\end{proof}

\subsection{The direct one-third derivative estimate}\label{sec:baseline}
There is a shorter argument with a larger regularity cost.  The
one-dimensional Sobolev embedding
$H^{1/3}(\T)\hookrightarrow L^6(\T)$ gives, for
$f(x,\theta)=\sum_n f_n(x)e^{in\theta}$,
\[
 \|\U(t)f\|_{L^6(M)}
 \lesssim
 \left\|\left(\sum_n\br n^{2/3}
       |e^{it\Delta_H}f_n|^2\right)^{1/2}\right\|_{L_x^6(\HH)}.
\]
The factors $e^{-itn^2}$ have modulus one and disappear from this
inequality.  Minkowski and the hyperbolic Strichartz estimate
$\|e^{it\Delta_H}g\|_{L_t^2([0,1];L_x^6)}\lesssim\|g\|_2$
from \cite{AP} yield
\begin{align*}
 \|\U(t)f\|_{\X([0,1])}
 &\lesssim
 \left(\sum_n\br n^{2/3}
 \|e^{it\Delta_H}f_n\|_{L_t^2L_x^6}^2\right)^{1/2}\\
 &\lesssim
 \left(\sum_n\br n^{2/3}\|f_n\|_2^2\right)^{1/2}
 \asymp\|\br{D_\theta}^{1/3}f\|_2.
\end{align*}
This argument does not use the periodic time oscillations.  The reduction
from $1/3$ to every positive regularity requires the frequency-dependent
time interaction and the decoupling step developed in the following sections.

\section{Decoupling with an additional spatial variable}

This section records the extension needed to apply parabola decoupling without interchanging the time and hyperbolic-space norms. Let $(Z,\mu)$ be a sigma-finite measure space. All Fourier support conditions in this section refer only to the displayed Euclidean variables $(t,y)$; the variable $z\in Z$ is not transformed.

For $0<\delta\le1$, a center $\omega$ in a fixed bounded interval, and a fixed $A\ge1$, define the parallelogram
\begin{equation}\label{eq:cap}
 \Theta_\omega(\delta)=
 \{(\tau,\xi):|\xi-\omega|\le A\delta,
 \ |\tau+2\omega\xi-\omega^2|\le A\delta^2\}.
\end{equation}
Centers will be $c_A\delta$ separated, with $c_A$ sufficiently large. A family with spacing comparable to $\delta$ can be divided into a bounded number of such subfamilies.

\begin{proposition}[Mixed decoupling with an $L^6$ variable]\label{prop:vector-dec}
Let $F_\omega(t,y,z)$ be a finite family whose $(t,y)$ Fourier supports lie in $\Theta_\omega(\delta)$. Then, for every $\eps>0$,
\begin{equation}\label{eq:vector-dec}
 \norm[{L_t^2L_{y,z}^6}]{\sum_\omega F_\omega}
 \le C_{A,\eps}\delta^{-\eps}
 \left(\sum_\omega\norm[{L_t^2L_{y,z}^6}]{F_\omega}^2\right)^{1/2}.
\end{equation}
The constant is independent of $Z$ and of the number of functions.
\end{proposition}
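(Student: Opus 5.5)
Our plan is to start from the pure $L^6$ decoupling theorem of Bourgain and Demeter for the parabola, extended trivially to an additional $L^6$ variable, and then lower the time exponent from $6$ to $2$ by a localization and reproducing-kernel argument.

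\emph{Step 1: the $L^6_{t,y,z}$ version.} For fixed $z$, the functions $F_\omega(\cdot,\cdot,z)$ have Fourier supports in the caps $\Theta_\omega(\delta)$. Scalar $\ell^2$ decoupling for the parabola in $L^6(\R^2)$ therefore gives
\[
 \norm[{L^6_{t,y}}]{\textstyle\sum_\omega F_\omega(\cdot,\cdot,z)}
 \lesssim_\eps\delta^{-\eps}\Bigl(\sum_\omega\norm[{L^6_{t,y}}]{F_\omega(\cdot,\cdot,z)}^2\Bigr)^{1/2}.
\]
We raise this to the sixth power and integrate in $z$. Minkowski's inequality in $L^3_z$, applied to the sum of the squares $\norm{F_\omega(z)}_{L^6_{t,y}}^2$, then gives the decoupling inequality in $L^6_{t,y,z}$ with the same constant. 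This step is legitimate because all exponents are equal to $6$, so no norms are interchanged.

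\emph{Step 2: localizing time to scale $\delta^{-2}$.} The caps have $\tau$-width $A\delta^2$ only after the shear $\tau\mapsto\tau+2\omega\xi$, while their total $\tau$-extent is $O(1)$. The time uncertainty scale is therefore $\delta^{-2}$ for each cap, but only $O(1)$ for the union of caps.

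The plan is the standard route used by Dasu--Jung--Li--Madrid. We cover $\R_t$ by intervals $I$ of length $\delta^{-2}$ and use a smooth partition $\sum_I\phi_I(t)=1$ whose pieces $\phi_I$ have Fourier transforms supported in $|\tau|\lesssim\delta^2$. Multiplying by $\phi_I$ enlarges each cap only by a bounded factor, so Step 1 still applies to $\phi_IF_\omega$. On a single interval $I$, we pass from $L^2_t$ to $L^6_t$ and back.

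The upper bound $\norm{G}_{L^2(I)L^6}\le|I|^{1/3}\norm{G}_{L^6(I)L^6}$ follows from Hölder. For the reverse direction on each piece, we use that $\phi_IF_\omega$ is locally constant in $t$ at scale $\delta^{-2}$ along the sheared direction. Concretely, $F_\omega=F_\omega*\check\rho_\omega$, where $\rho_\omega$ is a bump equal to one on an enlargement of $\Theta_\omega$; this is the reproducing-kernel identity. After undoing the shear $y\mapsto y-2\omega t$, which preserves the $L^6_{y,z}$ norm for each $t$, the function $t\mapsto\norm{F_\omega(t)}_{L^6_{y,z}}$ is dominated by its average against a weight $w_I$ adapted to $I$. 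This yields
\[
 \norm{F_\omega}_{L^6(w_I)L^6}\lesssim|I|^{-1/3}\norm{F_\omega}_{L^2(w_I)L^6}.
\]
Combining the two inequalities on $I$ gives the estimate with weights $w_I$, and summing over $I$ in $\ell^2$ removes the weights because $\sum_I w_I\lesssim1$.

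\emph{Step 3: assembly, and the main obstacle.} It remains to sum over $I$, using Minkowski on $\sum_I\sum_\omega$ in the $\ell^2$ quantities. The constants stay uniform in $Z$ and in the number of caps.

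The hard part is the local-constancy step in Step 2. The shear direction depends on $\omega$, and $y$ is tied to $z$ inside the $L^6$ norm, so we cannot simply freeze $y$. We would handle this by bounding $\norm{F_\omega(t)}_{L^6_{y,z}}$ via Minkowski applied to $F_\omega=F_\omega*\check\rho_\omega$ in the variables $(t,y)$. Translations in $y$ cost nothing in $L^6_{y,z}$, so only the $t$-marginal of $|\check\rho_\omega|$, which is an $L^1$ bump at scale $\delta^{-2}$, survives. This gives $\norm{F_\omega(t)}\le\int\norm{F_\omega(s)}\,\kappa(t-s)\,\dd s$ with $\kappa$ a scale-$\delta^{-2}$ bump, and Hölder converts this average into the required $L^6\to L^2$ reversal.
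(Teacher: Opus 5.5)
Your route is the paper's: pure $L^6$ decoupling extended in $z$ by Minkowski in $L^3_z$, time localization at scale $\delta^{-2}$, H\"older on each $I$, and a reverse H\"older step from the reproducing identity in which only the $t$-marginal $\int|\check\rho_\omega(t,y)|\dd y$ survives. That marginal is exactly the paper's $a_\omega$. One link, however, does not close as written. H\"older on $I$ leaves you with $\|\sum_\omega F_\omega\|_{L^6(I;L^6_{y,z})}$, whereas Step 1 controls only the global norm $\|\sum_\omega\phi_IF_\omega\|_{L^6_{t,y,z}}$. Passing from the first to the second requires $|\phi_I|\gtrsim1$ on $I$, and a partition $\sum_I\phi_I=1$ with compactly supported $\widehat{\phi_I}$ does not provide this. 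After rescaling to unit spacing, Poisson summation lets you add to $\phi$ any Fourier-compact $\psi$ with $\widehat\psi=0$ on $2\pi\Z$ without destroying the partition property, so $\phi_I$ can be made to vanish at a point of $I$. A focusing wave concentrated near that point then makes $\|F\|_{L^6(I)}/\|\phi_IF\|_{L^6}$ of order $\delta^{-1/3}$. Putting $G=\phi_IF$ into the H\"older step instead does not help: you then cannot reassemble $F$ on $I$ from the pieces $\phi_{I'}F$ without an already localized decoupling inequality. The partition property is simply not what is needed. The paper takes a single $\phi\in\Sch(\R)$ with compact Fourier support and $|\phi|\ge1$ on $[-1,1]$ (a constant multiple of a dilate of $|\check\beta|^2$, with $\beta\ge0$ a bump near the origin). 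It sets $\phi_I(t)=\phi((t-c_I)\delta^2)$ and uses only $\sup_t\sum_I|\phi_I(t)|^2<\infty$ in the final summation.

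A smaller point concerns the weights. Take $w_I=(1+|t-c_I|/|I|)^{-M}$ on both sides of your inequality $\|F_\omega\|_{L^6(w_I)L^6}\lesssim|I|^{-1/3}\|F_\omega\|_{L^2(w_I)L^6}$; with the same weight it is false. For a wave packet at time distance $D|I|$ from $I$, the two sides are of order $|I|^{1/6}D^{-M/6}$ and $|I|^{1/6}D^{-M/2}$. Your bound $g\le\kappa*g$ for $g(t)=\|F_\omega(t)\|_{L^6_{y,z}}$ does give $\sup_t g(t)w_I(t)^{1/2}\lesssim|I|^{-1/2}\|g\|_{L^2(w_I)}$. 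Hence the inequality holds with $w_I^3$ on the left, and this suffices because $|\phi_I|^6\lesssim w_I^3$. The paper avoids this bookkeeping altogether. It applies the reproducing identity to $G_\omega=\phi_IF_\omega$ itself, whose Fourier support is an enlarged cap, and uses Young's inequality globally in time with $\|a_\omega\|_{L^{3/2}}\lesssim\delta^{2/3}$. The same factor $\phi_I$ therefore appears before and after the reverse H\"older step, and the summation over $I$ is immediate.
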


\begin{proof}
We first explain the pure $L^6$ estimate. The scalar parabola decoupling theorem \cite{BD} gives, for functions supported in these caps or fixed enlargements of them,
\[
 \norm[{L^6_{t,y}}]{\sum_\omega G_\omega(\cdot,\cdot,z)}
 \le C_{A,\eps}\delta^{-\eps}
 \left(\sum_\omega\norm[{L^6_{t,y}}]{G_\omega(\cdot,\cdot,z)}^2\right)^{1/2}.
\]
This is the standard global family form of decoupling; see also \cite[Theorem 1, with $(q,r)=(6,6)$]{DJLM}. To obtain it from the local weighted theorem, cover $\R^2$ by boxes $B$ of side $\delta^{-2}$, with the usual rapidly decreasing weights $w_B$ satisfying $\sum_Bw_B\lesssim1$. If $a_{\omega,B}=\|G_\omega\|_{L^6(w_B)}$, the required summation is
\[
 \left(\sum_B\left(\sum_\omega a_{\omega,B}^2\right)^3\right)^{1/6}
 \le\left(\sum_\omega\left(\sum_Ba_{\omega,B}^6\right)^{1/3}\right)^{1/2}
 \lesssim\left(\sum_\omega\|G_\omega\|_6^2\right)^{1/2}.
\]
Integrating the scalar global estimate in $z$ and using Minkowski once more yields
\begin{equation}\label{eq:vector-pure6}
 \norm[{L^6_{t,y,z}}]{\sum_\omega G_\omega}
 \le C_{A,\eps}\delta^{-\eps}
 \left(\sum_\omega\norm[{L^6_{t,y,z}}]{G_\omega}^2\right)^{1/2}.
\end{equation}
Indeed, the required inequality is $\| (\sum a_\omega^2)^{1/2}\|_{L_z^6}
\le(\sum\|a_\omega\|_{L_z^6}^2)^{1/2}$.

We now reduce the time exponent, following the reverse-H\"older mechanism in \cite[Section 3.4]{DJLM}, with its action on the extra variable made explicit. Choose $\phi\in\Sch(\R)$ with compact Fourier support and $|\phi|\ge1$ on $[-1,1]$. Such a function is obtained, for example, by dilating and multiplying the squared modulus of the inverse Fourier transform of a smooth nonnegative bump supported near zero. Partition the time line into intervals $I$ of length
\[
 T_\delta=\delta^{-2},
\]
with centers $c_I$, and let $\phi_I(t)=\phi((t-c_I)/T_\delta)$. The rapid decay of $\phi$ gives
\begin{equation}\label{eq:time-weight-sum}
 \sup_t\sum_I|\phi_I(t)|^2<\infty.
\end{equation}

Let $B_\phi$ satisfy
$\supp\widehat\phi\subset[-B_\phi,B_\phi]$.
For a fixed time interval $I$, the function
$G_\omega=\phi_I F_\omega$ has partial Fourier support in
\[
 |\xi-\omega|\le A\delta,\qquad
 |\tau+2\omega\xi-\omega^2|
       \le(A+B_\phi)\delta^2.
\]
Choose a fixed $C>A+B_\phi+A+1$ and smooth compactly supported
functions $\kappa,\beta$ equal to one on $[-1,1]$. The multiplier
\[
 m_\omega(\tau,\xi)=
 \kappa\bigl((\xi-\omega)/(C\delta)\bigr)
 \beta\bigl((\tau+2\omega\xi-\omega^2)/(C\delta^2)\bigr)
\]
equals one on that support. Write
$k=\mathcal F^{-1}\kappa$ and $b=\mathcal F^{-1}\beta$, with the
one-dimensional inverse transform normalized by $(2\pi)^{-1}$.
Using the coordinates
\[
 u=\frac{\xi-\omega}{C\delta},\qquad
 v=\frac{\tau+2\omega\xi-\omega^2}{C\delta^2},
\]
whose inverse is
\[
 \xi=\omega+C\delta u,\qquad
 \tau=-\omega^2-2\omega C\delta u+C\delta^2v,
\]
and whose Jacobian has absolute value $C^2\delta^3$, gives
\begin{equation}\label{eq:reproducer}
 K_\omega(t,y)=C^2\delta^3
 e^{i(\omega y-\omega^2t)}
 k\bigl(C\delta(y-2\omega t)\bigr)
 b(C\delta^2t).
\end{equation}
In particular, if
$a_\omega(t)=\int_\R|K_\omega(t,y)|\dd y$, then
\[
 a_\omega(t)
 =C\delta^2\|k\|_{L^1(\R)}|b(C\delta^2t)|,
\]
and therefore
\begin{equation}\label{eq:kernel-young}
 \|a_\omega\|_{L^{3/2}(\R)}
 =(C\delta^2)^{1/3}\|k\|_1\|b\|_{3/2}
 \lesssim_A\delta^{2/3}.
\end{equation}
The estimate is uniform in $\omega$: the shear
$y\mapsto y-2\omega t$ is a translation at each fixed time.

For a function $G$ with finite $L_t^2L_{y,z}^6$ norm, Minkowski and
translation invariance give
\[
 \|(K_\omega*_{t,y}G)(t)\|_{L_{y,z}^6}
 \le\int_\R a_\omega(t-s)\|G(s)\|_{L_{y,z}^6}\dd s.
\]
Scalar Young's inequality, with
$1+1/6=1/(3/2)+1/2$, now implies
\[
 \|K_\omega*_{t,y}G\|_{L_t^6L_{y,z}^6}
 \le\|a_\omega\|_{3/2}\|G\|_{L_t^2L_{y,z}^6}
 \lesssim_A\delta^{2/3}\|G\|_{L_t^2L_{y,z}^6}.
\]
Apply this with $G=G_\omega$, for which the Fourier support identity
gives $K_\omega*_{t,y}G_\omega=G_\omega$. In particular,
\begin{equation}\label{eq:reverse-holder}
 \norm[{L_t^6L_{y,z}^6}]{G_\omega}
 \lesssim_A\delta^{2/3}\norm[{L_t^2L_{y,z}^6}]{G_\omega}.
\end{equation}
This proves the reverse
H\"older estimate with the additional spatial variable included.

H\"older on $I$, \eqref{eq:vector-pure6}, and \eqref{eq:reverse-holder} now give
\begin{align}\label{eq:local-mixed-dec}
 \norm[{L_t^2(I;L_{y,z}^6)}]{\sum_\omega F_\omega}
 &\le T_\delta^{1/3}\norm[{L^6_{t,y,z}}]{\sum_\omega\phi_I F_\omega}\\
 &\le C_{A,\eps}\delta^{-2/3-\eps}
 \left(\sum_\omega\norm[{L^6_{t,y,z}}]{\phi_I F_\omega}^2\right)^{1/2}\notag\\
 &\le C_{A,\eps}\delta^{-\eps}
 \left(\sum_\omega\norm[{L_t^2L_{y,z}^6}]{\phi_I F_\omega}^2\right)^{1/2}.\notag
\end{align}
The final summation is worth writing explicitly, because it preserves the order of the mixed norms. Squaring \eqref{eq:local-mixed-dec} and summing over the disjoint intervals gives
\begin{align*}
 \Bigl\|\sum_\omega F_\omega\Bigr\|_{L_t^2L_{y,z}^6}^2
 &\le C_{A,\eps}\delta^{-2\eps}
      \sum_I\sum_\omega\int_\R |\phi_I(t)|^2
               \|F_\omega(t)\|_{L_{y,z}^6}^2\dd t\\
 &\le C_{A,\eps}\delta^{-2\eps}
      \sum_\omega\|F_\omega\|_{L_t^2L_{y,z}^6}^2,
\end{align*}
where Tonelli and \eqref{eq:time-weight-sum} were used in the last line. The time weight is the same function $\phi_I$ before and after the reverse H\"older estimate. This proves \eqref{eq:vector-dec}. Initially one may work with finite sums and smooth integrable functions; approximation gives the statement whenever the right side is finite. Boundedly overlapping cap families are handled by coloring and the triangle inequality.
\end{proof}

\subsection{Periodic transference}

We use the Fourier transform
\[
 \widehat v(\xi)=\int_{\R}e^{-iy\xi}v(y)\dd y,
 \qquad
 v(y)=\frac1{2\pi}\int_{\R}e^{iy\xi}\widehat v(\xi)\dd\xi.
\]
The compact Fourier support of a spatial cutoff gives an exact
periodization identity. In particular, the following argument does not
require passing to a limit in the mixed norm.

\begin{lemma}[Exact periodic averaging]\label{lem:exact-periodic-average}
Let $\zeta\in\Sch(\R)$ be nonzero, with
$\supp\widehat\zeta\subset[-1,1]$, and put
\[
 \zeta_R(y)=\zeta(y/R),\qquad
 c_\zeta=\frac1{2\pi}\int_\R|\zeta(v)|^6\dd v>0.
\]
For every $R>6$ and every measurable $2\pi$-periodic function
$H(y,z)$,
\begin{equation}\label{eq:exact-periodic-sixth}
 \norm[{L^6(\R_y\times Z)}]{\zeta_RH}^{6}
 =R c_\zeta\,\norm[{L^6(\T\times Z)}]{H}^{6}.
\end{equation}
The identity is valid also as an identity of extended nonnegative
numbers. Consequently, if $H=H(t,y,z)$ is periodic in $y$, then
\begin{equation}\label{eq:exact-periodic-mixed}
 \norm[{L_t^2L_{y,z}^6(\R\times\R\times Z)}]{\zeta_RH}
 =(R c_\zeta)^{1/6}
 \norm[{L_t^2L_{\theta,z}^6(\R\times\T\times Z)}]{H}.
\end{equation}
\end{lemma}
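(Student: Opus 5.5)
We would prove \eqref{eq:exact-periodic-sixth} by Fubini in $z$, which reduces it to a pointwise-in-$z$ identity for the scalar periodic function $|H(\cdot,z)|^6$. Fix $z$ and set $g(y)=|H(y,z)|^6$. This is a nonnegative measurable $2\pi$-periodic function, possibly with infinite integral over a period. Write $w_R(y)=|\zeta_R(y)|^6=|\zeta(y/R)|^6$. The quantity to compute is $\int_\R w_R(y)g(y)\dd y$. Cutting $\R$ into translates of $[0,2\pi)$ and using Tonelli, which is valid for extended nonnegative values, gives
\[
 \int_\R w_Rg\dd y=\int_0^{2\pi}g(\theta)\,P_R(\theta)\dd\theta,
 \qquad P_R(\theta)=\sum_{k\in\Z}w_R(\theta+2\pi k).
\]
The whole matter is therefore to show that the periodization $P_R$ is the constant $\frac{1}{2\pi}\int_\R w_R=R c_\zeta$ once $R>6$.

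The key step is Poisson summation, applied to $w_R=\zeta_R^3\overline{\zeta_R}^3$. Since $\supp\widehat{\zeta_R}\subset[-1/R,1/R]$, and conjugation reflects the Fourier support, the sixfold product has Fourier support in $[-6/R,6/R]$. This set lies inside $(-1,1)$ precisely when $R>6$. Moreover $w_R$ is Schwartz, so Poisson summation holds pointwise:
\[
 P_R(\theta)=\frac{1}{2\pi}\sum_{m\in\Z}\widehat{w_R}(m)e^{im\theta}
 =\frac{1}{2\pi}\widehat{w_R}(0)
 =\frac{1}{2\pi}\int_\R|\zeta(y/R)|^6\dd y=Rc_\zeta.
\]
In this computation all nonzero integer frequencies vanish by the support condition. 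Positivity of $c_\zeta$ follows from $\zeta\neq0$.

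Substituting the constant value of $P_R$ and integrating in $z$ gives \eqref{eq:exact-periodic-sixth}, including the case where both sides are infinite. For \eqref{eq:exact-periodic-mixed}, we apply \eqref{eq:exact-periodic-sixth} at each fixed $t$ to $H(t,\cdot,\cdot)$. For a measurable $H$ this is legitimate for almost every $t$, via Tonelli. Taking sixth roots, then squaring and integrating in $t$, gives the identity of mixed norms with factor $(Rc_\zeta)^{1/6}$.

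The only delicate point is the justification of pointwise Poisson summation together with the exact Fourier support count. The support count works because the Fourier transform of a product is the convolution of the Fourier transforms, each supported in $[-1/R,1/R]$. Pointwise Poisson summation is justified because $w_R$ is Schwartz, so both sums converge absolutely. The rest is bookkeeping with Tonelli, and no limiting argument in the mixed norm is needed.
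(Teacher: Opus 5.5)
Your proposal is correct and is essentially the paper's argument. You periodize the weight $|\zeta_R|^6$ and note that its Fourier support lies in $[-6/R,6/R]$, so for $R>6$ every nonzero Fourier coefficient vanishes and the periodization is the constant $Rc_\zeta$; Tonelli then finishes. The differences are cosmetic: the paper keeps the unscaled weight $w=|\zeta|^6$ and evaluates $\widehat w(mR)$, and it applies Tonelli jointly in $(y,z)$ rather than fixing $z$ first.
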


\begin{proof}
Set $w=|\zeta|^6=\zeta^3\overline\zeta^{\,3}$. Then $w\in\Sch(\R)$
and $\supp\widehat w\subset[-6,6]$. The smooth periodization
\[
 W_R(\vartheta)=\frac1R\sum_{k\in\Z}
       w\bigl((\vartheta+2\pi k)/R\bigr)
\]
has Fourier coefficients
\[
 \frac1{2\pi}\int_0^{2\pi}
       W_R(\vartheta)e^{-im\vartheta}\dd\vartheta
 =\frac1{2\pi}\widehat w(mR),\qquad m\in\Z.
\]
These formulas follow by absolute convergence of the Schwartz sum and
the substitution $\vartheta+2\pi k=Rv$. If $R>6$, all nonzero Fourier
coefficients vanish. Hence
\[
 W_R(\vartheta)=\frac{\widehat w(0)}{2\pi}=c_\zeta
 \quad\text{for all }\vartheta.
\]
Tonelli's theorem and periodicity now give
\begin{align*}
 \frac1R\int_{\R}\int_Z
       w(y/R)|H(y,z)|^6\dd\mu(z)\dd y
 &=\int_0^{2\pi}\int_Z
       W_R(\vartheta)|H(\vartheta,z)|^6
          \dd\mu(z)\dd\vartheta\\
 &=c_\zeta\norm[{L^6(\T\times Z)}]{H}^6.
\end{align*}
This proves \eqref{eq:exact-periodic-sixth}. Take the one-third power
of that identity at each time and integrate in $t$ to obtain
\eqref{eq:exact-periodic-mixed}.
\end{proof}

\begin{proposition}\label{prop:periodic-dec}
Let $1\le L\le N$, and let $F(t,\theta,z)$ be periodic in $\theta$,
with partial Fourier support contained in
\begin{equation}\label{eq:thick-parabola}
 \{(\tau,n):|n|\le C_0N,\quad|\tau+n^2|\le C_0L^2\}.
\end{equation}
Choose a standard smooth partition of unity in the $n$ variable whose
pieces have support in intervals $J$ of length comparable to $L$,
whose centers are separated by a fixed positive multiple of $L$,
and whose overlap and multiplier sizes are uniformly bounded. Write
$F=\sum_JF_J$. Then, for every $\eps>0$,
\begin{equation}\label{eq:periodic-dec}
 \norm[{L_t^2L_{\theta,z}^6}]{F}
 \le C_{C_0,\eps}(N/L)^\eps
       \left(\sum_J\norm[{L_t^2L_{\theta,z}^6}]{F_J}^2\right)^{1/2}.
\end{equation}
All time norms in this proposition are over $\R$. The same assertion
holds for any finite family with the corresponding interval support
conditions and with sum $F$.
\end{proposition}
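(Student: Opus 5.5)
The argument rescales to the normalized parabola, applies Proposition~\ref{prop:vector-dec} on the line, and returns to the circle with Lemma~\ref{lem:exact-periodic-average}. We work first with finite sums of smooth functions and remove this at the end by approximation.

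\textbf{Rescaling.} Put $\delta=L/N\le1$. Replace the variables by $y'=\theta/N$ and $t'=t/N^2$, so that the frequencies become $\xi=n/N$ and $\tau'=N^2\tau$. The support condition \eqref{eq:thick-parabola} then reads $|\xi|\le C_0$ and $|\tau'+\xi^2|\le C_0\delta^2$. A piece $F_J$, with $J$ centered at $n_J$, has support in $|\xi-\omega|\lesssim\delta$ with $\omega=n_J/N$. On this set
\[
 \tau'+2\omega\xi-\omega^2=(\tau'+\xi^2)-(\xi-\omega)^2=O(\delta^2),
\]
so each support lies in a cap $\Theta_\omega(\delta)$ as in \eqref{eq:cap}, for some $A=A(C_0)$. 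After the change of variables the periodic variable has period $2\pi/N$. The mixed norms $L_t^2L^6$ only pick up Jacobian factors $N^{1/2}\cdot N^{1/6}$, and these are the same on both sides of \eqref{eq:periodic-dec}. One could avoid this bookkeeping by rescaling only time and stating the caps in unnormalized form, but rescaling is cleaner. The separation of the centers, a fixed multiple of $\delta$, is handled by splitting into boundedly many subfamilies, as allowed after \eqref{eq:cap}.

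\textbf{Transference.} We pass from the circle to the line before applying decoupling, and it is simplest to do this in the original variables. Take $\zeta$ as in Lemma~\ref{lem:exact-periodic-average} and a large $R$. Multiplying by $\zeta_R(\theta)$ enlarges each $n$-support by at most $1/R$, which is negligible compared with $\delta N=L$ once $R\ge1$. The enlarged supports of $\zeta_RF_J$, viewed on $\R_\theta$, therefore still lie in the rescaled caps with a slightly larger $A$. The time support is untouched, since $\zeta_R$ does not depend on $t$. Applying Proposition~\ref{prop:vector-dec} to the family $\{\zeta_RF_J\}$ after the rescaling gives
\[
 \norm[{L_t^2L_{\theta,z}^6(\R\times\R\times Z)}]{\zeta_RF}
 \le C\delta^{-\eps}\Bigl(\sum_J\norm[{L_t^2L_{\theta,z}^6(\R\times\R\times Z)}]{\zeta_RF_J}^2\Bigr)^{1/2}.
\]
The exact identity \eqref{eq:exact-periodic-mixed} with $R>6$ converts each of these norms to the corresponding norm on $\T$, multiplied by the common factor $(Rc_\zeta)^{1/6}$. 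Cancelling that factor yields \eqref{eq:periodic-dec}.

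\textbf{Main obstacle.} The step needing care is that multiplying by $\zeta_R$ keeps the Fourier supports inside the caps uniformly. The convolution in $\xi$ moves support by at most $1/(RN)$ in normalized units. This must be $\lesssim\delta$, and it is, since $1/(RN)\le L/N$ for $R\ge1$. The cap condition in $\tau$ must also survive this shift. Writing $\xi_0$ for the original frequency and $\xi=\xi_0+O(1/(RN))$, we have $|\tau'+\xi^2|\le C_0\delta^2+O(1/(RN))$. The error $O(1/(RN))$ is at most $\delta^2=L^2/N^2$ provided $R\ge N/L^2$. We therefore choose $R$ larger than both $6$ and $N/L^2$, which is possible because the identity holds for every $R>6$. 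This keeps the constant $A$ uniform in $L$ and $N$, so the decoupling constant $C_{A,\eps}$ depends only on $C_0$ and $\eps$.
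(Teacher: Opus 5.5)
Your proposal is correct and follows essentially the paper's route. You rescale with $\delta=L/N$ onto the normalized caps, apply Proposition~\ref{prop:vector-dec} after splitting into boundedly many subfamilies, multiply the periodic pieces by $\zeta_R$ with $R>6$ and $R\gtrsim 1+N/L^2$ so that the frequency shift of size $1/R$ preserves the cap conditions with enlarged constants, and cancel the common factor $(Rc_\zeta)^{1/6}$ from \eqref{eq:exact-periodic-mixed}.

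The only blemishes are in the rescaling bookkeeping, which should read $y'=N\theta$, $t'=N^2t$. This gives $\xi=n/N$, $\tau'=\tau/N^2$, period $2\pi N$, and common norm factor $N\cdot N^{1/6}$ rather than $N^{1/2}\cdot N^{1/6}$. These slips are harmless, since the normalized support you state is the correct one and the factor cancels.
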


\begin{proof}
First let the spatial variable be real. Suppose that
$F_J(t,y,z)$ has $y$ frequency in an interval of width $O(L)$ and
center $n_J$, and that
$|\tau+\xi^2|\le C_0L^2$ on its Fourier support. Define the rescaled
functions
\[
 \widetilde F_J(s,v,z)=F_J(s/N^2,v/N,z),
 \qquad
 \delta=L/N,\qquad\omega_J=n_J/N.
\]
The partial Fourier transform obeys
\[
 \widehat{\widetilde F_J}(\sigma,\xi,z)
 =N^3\widehat F_J(N^2\sigma,N\xi,z).
\]
Thus the normalized frequencies satisfy
\[
 |\xi-\omega_J|\lesssim\delta,\qquad
 |\sigma+\xi^2|\lesssim\delta^2,
\]
and consequently
\[
 |\sigma+2\omega_J\xi-\omega_J^2|
 \le|\sigma+\xi^2|+|\xi-\omega_J|^2
 \lesssim\delta^2.
\]
The centers $\omega_J$ remain in a fixed bounded interval and have
spacing comparable to $\delta$. Divide them into a bounded number of
subfamilies with the separation required in
Proposition~\ref{prop:vector-dec}. Apply that proposition to each
subfamily and then use the triangle inequality and Cauchy--Schwarz
over the finitely many colors.

For clarity, the exact mixed-norm scaling is
\begin{align*}
 \norm[{L_s^2L_{v,z}^6}]{\widetilde F_J}^{\,2}
 &=\int_\R
       \left(N\int_{\R\times Z}
            |F_J(s/N^2,y,z)|^6\dd y\dd\mu(z)\right)^{1/3}
          \dd s\\
 &=N^{7/3}\norm[{L_t^2L_{y,z}^6}]{F_J}^{\,2}.
\end{align*}
Therefore every norm in the rescaled inequality is multiplied by the
same factor $N^{7/6}$. Cancelling it proves the real-line family
inequality with constant $C_\eps(N/L)^\eps$. If $\delta$ is bounded
below, there are only a bounded number of intervals and the triangle
inequality gives the same conclusion directly. Fixed enlargements of
the support constants affect only the implicit constant.

Now return to periodic $F_J$. It suffices to consider a finite family
for which the right side of \eqref{eq:periodic-dec} is finite;
otherwise the assertion is immediate. Lift these functions
periodically to the real $y$ line, and choose $\zeta$ as in
Lemma~\ref{lem:exact-periodic-average}. Multiplication by $\zeta_R$
replaces a discrete frequency $n$ by
$\xi=n+\nu$, where $|\nu|\le R^{-1}$, and leaves the time frequency
unchanged. Hence on the Fourier support of $\zeta_RF_J$,
\[
 |\tau+\xi^2|
 \le C_0L^2+2C_0N/R+R^{-2}.
\]
Choose any finite
\[
 R\ge C(1+N/L^2),\qquad R>6,
\]
with $C$ sufficiently large in terms of the fixed support constants.
Since $L\ge1$, the spatial interval width increases by at most
$R^{-1}\le L$, and the last display is bounded by a fixed multiple
of $L^2$. Thus $\{\zeta_RF_J\}_J$ satisfies the real-line family
estimate just proved, with constants independent of $R$. This uses
the functions themselves, so no commutator between a frequency
projection and $\zeta_R$ occurs.

Apply that estimate and use
\eqref{eq:exact-periodic-mixed} on its left side and on each term of
its right side:
\begin{align*}
 (Rc_\zeta)^{1/6}\norm[{L_t^2L_{\theta,z}^6}]{F}
 &\le C_{C_0,\eps}(N/L)^\eps
     \left(\sum_J
       (Rc_\zeta)^{1/3}
       \norm[{L_t^2L_{\theta,z}^6}]{F_J}^2\right)^{1/2}.
\end{align*}
Cancel the common positive factor $(Rc_\zeta)^{1/6}$ to obtain
\eqref{eq:periodic-dec}. Only finitely many intervals meet the
frequency range in \eqref{eq:thick-parabola}; the asserted partition
form is therefore included in the finite-family statement.
\end{proof}

\section{The anisotropic estimate}

\subsection{Time localization and low hyperbolic frequencies}

\begin{proposition}\label{prop:lowH}
Let $1\le L\le N$ and let $h$ satisfy the spectral support assumptions of Proposition~\ref{prop:block}. For every $\eps>0$,
\begin{equation}\label{eq:lowH}
 \norm[{\X([0,1])}]{\U(t)h}
 \le C_\eps(N/L)^\eps\sqrt{\log(2L)}\norm[{2}]{h}.
\end{equation}
\end{proposition}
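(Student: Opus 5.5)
The plan is to combine the periodic decoupling of Proposition~\ref{prop:periodic-dec} with the matched-bandwidth bound of Corollary~\ref{cor:matched}. A cutoff in time is needed first, to produce the thickened parabola support. Fix $\chi\in\Sch(\R)$ with $|\chi|\ge1$ on $[0,1]$ and $\supp\widehat\chi\subset[-1,1]$, and put $\chi_L(t)=\chi(t)$. Set $F(t,\theta,x)=\chi(Lt)\ldots$; more carefully, I will not localize to a single time scale $L^{-1}$. Instead I partition $[0,1]$ into intervals $I_k$ of length $L^{-1}$ (there are about $L$ of them). Then
\[
 \|\U(t)h\|_{\X([0,1])}^2\le\sum_k\|\phi_k\,\U(t)h\|_{L_t^2L^6}^2,\qquad \phi_k(t)=\chi(L(t-t_k)).
\]
Expanding in modes, $\phi_k\U(t)h=\sum_n\phi_k(t)e^{-itn^2}(e^{it\Delta_H}h_n)e^{in\theta}$. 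The hyperbolic factor is treated as the extra variable $z=x\in Z=\HH$. It is not transformed in time, so its time oscillation $e^{-it(\lambda^2+1/4)}$ would spoil the parabola support. To remove it, I conjugate: define $F(t,\theta,x)=\phi_k(t)\bigl(e^{-it\Delta_H}\U(t)h\bigr)(x,\theta)$. This is the wrong object, because the $L^6$ norm in $x$ is not preserved. The alternative is to keep the hyperbolic phase and note that at spectral scale $\lambda\asymp L$ it shifts time frequency by $\lambda^2\lesssim L^2$. That is exactly the thickness $C_0L^2$ allowed in \eqref{eq:thick-parabola}, after first applying the Galilean-type recentering $\tau\mapsto\tau+\lambda_0^2$ modulo a spread of size $L^2$. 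Precisely, $\lambda^2\in[L^2/4,4L^2]$, so $e^{it\lambda^2}$ varies with frequency range $O(L^2)$. Multiplying $F$ by the unimodular factor $e^{itL^2}$ (which changes no norm) centers this range. The time cutoff $\phi_k$ adds frequency spread $O(L)\le L^2$. Hence the partial Fourier support in $(\tau,n)$, vector-valued in $x$, lies in $\{|n|\le c_0N,\ |\tau+n^2|\le C_0L^2\}$.

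To make this rigorous, I would write $F$ via the spectral resolution of $D_H$ as a superposition over $\lambda$ of functions with the required support, so that the support statement holds for the distributional Fourier transform in $t$ of the $L^6_x$-valued function. The reproducing-kernel argument of Proposition~\ref{prop:vector-dec} only needs $K_\omega*_{t,y}F_\omega=F_\omega$. This identity is linear and holds for each spectral piece.

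Next I apply Proposition~\ref{prop:periodic-dec} to $F$, with $Q_J$ a smooth partition into intervals of length $\asymp L$; note that $Q_J$ commutes with everything. This gives
\[
 \|\phi_k\U(t)h\|_{L_t^2L^6}\le C_\eps(N/L)^\eps\Bigl(\sum_J\|\phi_k\U(t)Q_Jh\|_{L_t^2L^6}^2\Bigr)^{1/2}.
\]
Because $\phi_k$ decays rapidly off $I_k$, I bound each term by $\sum_{m}\langle m\rangle^{-10}\|\U(t)Q_Jh\|_{\X(I_k+m/L)}$. Corollary~\ref{cor:matched} on the unit interval containing these shifts gives the bound $\sqrt{\log 2L}\,\|Q_Jh\|_2$, but without the $k$-summability.

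\textbf{Main obstacle.} Summing over the roughly $L$ intervals $k$ loses $L^{1/2}$, since the corollary does not gain from short intervals. I expect this to be the central issue. The fix I would try first is to avoid the partition altogether: use a single smooth time weight $\phi(t)$ equal to one on $[0,1]$, with $\widehat\phi$ compactly supported. Its Fourier spread $O(1)\le L^2$ is harmless, so the thickened parabola condition still holds. Then decouple once,
\[
 \|\U(t)h\|_{\X([0,1])}\le\|\phi\,\U(t)h\|_{L^2_t(\R)L^6}\le C_\eps(N/L)^\eps\Bigl(\sum_J\|\phi\,\U(t)Q_Jh\|_{L_t^2(\R)L^6}^2\Bigr)^{1/2}.
\]
Next I bound $\|\phi\U Q_Jh\|_{L^2(\R)L^6}^2\le\sum_{m\in\Z}\sup_{[m,m+1]}|\phi|^2\,\|\U Q_Jh\|_{\X([m,m+1])}^2$. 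Corollary~\ref{cor:matched} then applies on each unit interval, using unitarity. The decay of $\phi$ makes the series converge to $C\log(2L)\|Q_Jh\|_2^2$. Finally, almost orthogonality $\sum_J\|Q_Jh\|_2^2\lesssim\|h\|_2^2$ gives \eqref{eq:lowH}. The remaining care is the vector-valued Fourier support justification described above.
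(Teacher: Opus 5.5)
Your final argument is the paper's proof. It uses a single Schwartz time weight with compact Fourier support and a spectral-resolution check that $\mathcal F_{t,\theta}(\phi\,\U(t)h)$ lies in $|\tau+n^2|\lesssim L^2$ (the $e^{itL^2}$ recentering and the abandoned $L^{-1}$-partition are unnecessary). It then applies Proposition~\ref{prop:periodic-dec} with $Z=\HH$ and periodic intervals of length $\asymp L$, Corollary~\ref{cor:matched} on unit time intervals weighted by $\sup_{[m,m+1]}|\phi|^2$, and almost orthogonality. One slip to fix: a nonzero function with compactly supported Fourier transform extends to an entire function, so it cannot equal one on $[0,1]$; instead take $|\phi|\ge1$ on $[0,1]$, as with your initial $\chi$, which is all your first inequality uses.
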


\begin{proof}
Choose $\eta\in\Sch(\R)$ with compact Fourier support and $|\eta|\ge1$ on $[0,1]$. For
\[
 F(t,x,\theta)=\eta(t)\U(t)h(x,\theta),
\]
the spectral theorem and \eqref{eq:product} imply
\begin{equation}\label{eq:modulation}
 \supp\mathcal F_{t,\theta}F
 \subset\{(\tau,n):|n|\le C_0N,\ |\tau+n^2|\le C_1L^2\}.
\end{equation}
The constant $1/4$ and the bounded support of $\widehat\eta$ are absorbed by $C_1L^2$, since $L\ge1$. Here is a spectral-theorem justification of this partial Fourier support. Let $E_H$ be the projection-valued resolution of $\DHyper$ and let $h_n$ be the $n$th Fourier coefficient of $h$. The time transform is the $L^2(\HH)$-valued expression
\begin{equation}\label{eq:modulation-spectral-integral}
 \widehat F_n(\tau)
 =\int_0^\infty
       \widehat\eta(\tau+n^2+\lambda^2+1/4)\dd E_H(\lambda)h_n.
\end{equation}
One obtains this formula first after pairing with any $L^2$ test function in $x$, using the bounded spectral support and $\eta\in L^1(\R)$ to interchange the integrations. The integrand vanishes unless
$|\tau+n^2+\lambda^2+1/4|\le B_\eta$, where $B_\eta$ is fixed. Since $\lambda\le2L$ on the support of $h$, it follows that
\[
 |\tau+n^2|\le4L^2+1/4+B_\eta\le C_1L^2.
\]
This proves \eqref{eq:modulation} distributionally, including $L=1$. The same support statement is valid in the $L^6$ spectator formulation: the weighted space-time norms are finite by Proposition~\ref{prop:block}, and pairing against compactly supported tests in $x$ identifies the two distributions.

For an explicit interval partition, choose $\rho\in C_c^\infty(\R)$ with $\rho\ge0$ and $\rho>0$ on $[-1,1]$, and put
\[
 \rho_j(v)=\frac{\rho(v-j)}{\sum_{k\in\Z}\rho(v-k)},\qquad
 h_{J_j}=\rho_j(D_\theta/L)h.
\]
The denominator is smooth, one-periodic, and bounded away from zero. These multipliers sum to one, have uniformly bounded overlap, and are supported in intervals $J_j$ of width $O(L)$ centered at $jL$. Only $O(1+N/L)$ pieces act nontrivially on $h$. We abbreviate them by $h_J$. Then $F_J=\eta\U(t)h_J$. Proposition~\ref{prop:periodic-dec}, with $Z=\HH$, gives
\begin{equation}\label{eq:apply-dec}
 \norm[{\X([0,1])}]{\U(t)h}
 \le C_\eps(N/L)^\eps
 \left(\sum_J\norm[{L_t^2(\R;L^6(M))}]{\eta\U(t)h_J}^2\right)^{1/2}.
\end{equation}
These weighted norms are finite by Proposition~\ref{prop:block} and rapid decay of $\eta$, so the transference argument is applicable.

By Corollary~\ref{cor:matched}, on every translated unit interval $[k,k+1]$,
\[
 \norm[{\X([k,k+1])}]{\U(t)h_J}
 \lesssim\sqrt{\log(2L)}\norm[{2}]{h_J}.
\]
Indeed, time translation replaces $h_J$ by $\U(k)h_J$, which preserves both its $L^2$ norm and its spectral supports. Hence
\begin{align}\label{eq:eta-bound}
 \norm[{L_t^2(\R;L^6)}]{\eta\U(t)h_J}^2
 &\le\sum_{k\in\Z}\sup_{[k,k+1]}|\eta|^2
       \norm[{\X([k,k+1])}]{\U(t)h_J}^2\\
 &\lesssim\log(2L)\norm[{2}]{h_J}^2.\notag
\end{align}
Finally, Parseval and bounded overlap give
\[
 \sum_j\|h_{J_j}\|_2^2
 =2\pi\sum_{n\in\Z}\sum_j|\rho_j(n/L)|^2\|h_n\|_{L_x^2}^2
 \le C\|h\|_2^2.
\] Insert this into \eqref{eq:apply-dec} and \eqref{eq:eta-bound}.
\end{proof}

\subsection{Summation in the hyperbolic frequency}

\begin{proposition}[One periodic dyadic band]\label{prop:N}
For every $\eps>0$ and $N\in\mathcal D$,
\begin{equation}\label{eq:N-band}
 \norm[{\X([0,1])}]{\U(t)Q_Nf}
 \le C_\eps N^\eps\norm[{2}]{Q_Nf}.
\end{equation}
\end{proposition}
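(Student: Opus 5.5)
We split $g=Q_Nf$ in the hyperbolic frequency, $g=\sum_{L\in\mathcal D}P_Lg$, and treat the ranges $L\le N$ and $L>N$ separately. Each piece $P_Lg$ has hyperbolic spectral support in $[L/2,2L]$ (or $[0,2]$ when $L=1$) and periodic Fourier support in $|n|\le2N$. So each piece satisfies the hypotheses of Proposition~\ref{prop:block} with $c_0=2$. No $L^p$ Littlewood--Paley inequality in the hyperbolic variable is available. We therefore use the triangle inequality in $\X([0,1])$ and then Cauchy--Schwarz against \eqref{eq:H-L2}. For this to work, the bounds on the individual pieces must carry a summable weight in $L$, possibly after spending a small power of $N$.

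For $L\le N$ we apply Proposition~\ref{prop:lowH} with $\eps/2$ in place of $\eps$:
\[
\|\U(t)P_Lg\|_{\X}\le C_\eps (N/L)^{\eps/2}\sqrt{\log(2L)}\,\|P_Lg\|_2\lesssim_\eps N^{\eps/2}\sqrt{\log(2N)}\,\|P_Lg\|_2 .
\]
There are $O(\log(2N))$ dyadic values $L\le N$. Cauchy--Schwarz over them together with \eqref{eq:H-L2} gives a total contribution of at most $C_\eps N^{\eps/2}\log(2N)\|g\|_2\lesssim_\eps N^\eps\|g\|_2$.

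For $L>N$ we use \eqref{eq:block-log}. Since $N/L<1$, it gives the bound $C\sqrt{\log(2N)}\|P_Lg\|_2$, but this is not summable in $L$ because infinitely many dyadic $L$ occur. We therefore interpolate with \eqref{eq:block-tail}, or simply take the minimum of the two bounds. For $L\ge N^{2+\gamma}$ the tail bound $N^{1/3}L^{-1/6}$ already decays like a power of $L$. The range $N<L<N^{3}$ contains only $O(\log(2N))$ dyadic scales, and on it the log bound $\sqrt{\log(2N)}$ applies. Cauchy--Schwarz over this intermediate range costs a further $\sqrt{\log(2N)}$, for a total of $O(\log(2N))$.

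For $L\ge N^3$ we have $N^{1/3}L^{-1/6}\le L^{-1/18}$. Then
\[
\sum_{L\ge N^3}L^{-1/18}\|P_Lg\|_2\le\Bigl(\sum_L L^{-1/9}\Bigr)^{1/2}\Bigl(\sum_L\|P_Lg\|_2^2\Bigr)^{1/2}\lesssim\|g\|_2 .
\]
Combining the three ranges bounds $\|\U(t)g\|_{\X([0,1])}$ by $C_\eps N^\eps\|g\|_2$, which is \eqref{eq:N-band}.

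It remains to check that $\U(t)g=\sum_L\U(t)P_Lg$ in $\X([0,1])$. The partial sums are Cauchy in $\X$ by the bounds above, and they converge in $L^2_tL^2$ to $\U(t)g$, so the limits agree. The main obstacle is the first range. There the $(N/L)^\eps$ loss must be absorbed without any orthogonality in $L^6$, and the merely logarithmic number of scales is what makes the triangle inequality affordable. For $N=1$ the $L\le N$ range is only $L=1$, and the other two ranges are handled identically.
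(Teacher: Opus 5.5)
Your proposal is correct and follows the paper's own proof: the same hyperbolic dyadic split of $Q_Nf$, the same block constants from Proposition~\ref{prop:lowH}, \eqref{eq:block-log} and \eqref{eq:block-tail}, Cauchy--Schwarz against \eqref{eq:H-L2}, and the same Cauchy-in-$\X$ identification of the limit. The only differences are cosmetic: you count the $O(\log(2N))$ low scales instead of using the geometric decay of $(N/L)^{\eps}$, and you place the tail threshold at $N^3$ rather than $N^2$.
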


\begin{proof}
Put $h_L=P_LQ_Nf$. For finite hyperbolic sums, the triangle inequality and Cauchy--Schwarz yield
\begin{equation}\label{eq:H-sum}
 \norm[{\X}]{\sum_L\U(t)h_L}
 \le\left(\sum_L C_{L,N}^2\right)^{1/2}
       \left(\sum_L\norm[{2}]{h_L}^2\right)^{1/2},
\end{equation}
where the block constants may be chosen as follows:
\begin{equation}\label{eq:three-C}
 C_{L,N}\lesssim_\eps
 \begin{cases}
 (N/L)^\eps\sqrt{\log(2L)},&L\le N,\\
 \sqrt{\log(2N)},&N<L\le N^2,\\
 N^{1/3}L^{-1/6},&L>N^2.
 \end{cases}
\end{equation}
The first line is Proposition~\ref{prop:lowH}. The second follows from \eqref{eq:block-log}; the third is \eqref{eq:block-tail}. Empty ranges are omitted.

All sums over $L$ below are dyadic. For the first range,
\begin{equation}\label{eq:sum-low}
 \sum_{L\le N}C_{L,N}^2
 \lesssim_\eps N^{2\eps}
 \sum_{j=0}^\infty2^{-2\eps j}(1+j)
 \lesssim_\eps N^{2\eps}.
\end{equation}
There are $O(\log(2N))$ terms in the middle range, so
\begin{equation}\label{eq:sum-mid}
 \sum_{N<L\le N^2}C_{L,N}^2\lesssim\log^2(2N).
\end{equation}
The tail is summable:
\begin{equation}\label{eq:sum-high}
 \sum_{L>N^2}C_{L,N}^2
 \lesssim N^{2/3}\sum_{L>N^2}L^{-1/3}\lesssim1.
\end{equation}
For completeness, if $N=2^m$ then the middle range contains at most $m$ indices and the last sum equals a constant multiple of
\[
 N^{2/3}\sum_{j>2m}2^{-j/3}
 =N^{2/3}\frac{2^{-(2m+1)/3}}{1-2^{-1/3}}\lesssim1.
\]
In the first range, with $r_\eps=2^{-2\eps}<1$, the sum is bounded by
\[
 \sum_{j\ge0}(1+j)r_\eps^j=(1-r_\eps)^{-2}.
\]
For each fixed $\eps>0$, also $\log^2(2N)\le C_\eps N^{2\eps}$. Consequently
\begin{equation}\label{eq:constant-square-sum}
 \sum_{L\in\mathcal D}C_{L,N}^2\le C_\eps N^{2\eps}.
\end{equation}
Together with \eqref{eq:H-sum} and \eqref{eq:H-L2}, this proves \eqref{eq:N-band} for finite sums. The case $N=1$ is included: the first range consists only of $L=1$, the middle range is empty, and the same tail converges.

Here is the passage to the infinite series. If $\mathcal E\subset\mathcal D$ is a finite set contained in $\{L>R\}$, Cauchy--Schwarz gives
\[
 \Bigl\|\sum_{L\in\mathcal E}\U(t)P_LQ_Nf\Bigr\|_\X
 \le\Bigl(\sum_{L>R}C_{L,N}^2\Bigr)^{1/2}
       \Bigl(\sum_{L>R}\|P_LQ_Nf\|_2^2\Bigr)^{1/2}.
\]
This tends to zero as $R\to\infty$. Completeness of $\X$ provides a limit. The same spectral partial sums converge to $\U(t)Q_Nf$ in $L_t^\infty L^2$, by unitarity. Pairing against a compactly supported smooth space-time test function identifies both limits as distributions, so the $\X$ limit is precisely the original evolution.
\end{proof}

\subsection{Periodic reconstruction and completion of the proof}

\begin{proof}[Proof of Theorem~\ref{thm:main}]
For a trigonometric polynomial $v(x,\theta)$, the scalar Littlewood--Paley theorem on $\T$ gives, for almost every $x$,
\[
 \|v(x,\cdot)\|_{L_\theta^6}
 \lesssim\Bigl\|\Bigl(\sum_N|Q_Nv(x,\cdot)|^2\Bigr)^{1/2}\Bigr\|_{L_\theta^6}.
\]
The constant is independent of $x$. Taking sixth powers and integrating in $x$, then taking the $L_t^2$ norm, is legitimate by Tonelli. Since $Q_N$ commutes with $\U(t)$, this yields the first line below. The second line uses only a triangle inequality:
\begin{align}\label{eq:theta-sum}
 \norm[{\X}]{\U(t)f}
 &\lesssim\norm[{L_t^2L_{x,\theta}^6}]
       {\left(\sum_N|\U(t)Q_Nf|^2\right)^{1/2}}\\
 &\le\left(\sum_N\norm[{\X}]{\U(t)Q_Nf}^2\right)^{1/2}.\notag
\end{align}
Proposition~\ref{prop:N}, with $\eps=s$, gives
\[
 \norm[{\X}]{\U(t)f}
 \lesssim_s\left(\sum_NN^{2s}\norm[{2}]{Q_Nf}^2\right)^{1/2}
 \lesssim_s\norm[{2}]{\br{D_\theta}^{s}f}.
\]
The last step is exactly \eqref{eq:theta-weighted-parseval}. For clarity about the norm order, at each fixed time one has
\[
 \Bigl\|\Bigl(\sum_N|u_N(t)|^2\Bigr)^{1/2}\Bigr\|_6^2
 =\Bigl\|\sum_N|u_N(t)|^2\Bigr\|_3
 \le\sum_N\|u_N(t)\|_6^2.
\]
Integrating this inequality in time gives \eqref{eq:theta-sum} with the required $L_t^2L_{x,\theta}^6$ order. It does not exchange these two mixed norms.

We first apply the estimate to data with finitely many periodic and hyperbolic bands. Such data are dense in the norm \eqref{eq:data-norm}: spectral truncations converge by dominated convergence in the joint spectral measure. If $f^{(j)}\to f$ in that norm, the estimate makes $\U(t)f^{(j)}$ Cauchy in $\X$. Unitarity also gives convergence in $L_t^\infty L^2$ because $s>0$. Distributional uniqueness identifies the $\X$ limit with $\U(t)f$, and passing to the limit proves \eqref{eq:main} for every admissible datum. The elementary Fourier and square-function facts used here are recorded, for example, in \cite{Grafakos}.

To exclude negative regularity, choose a nonzero $\phi\in L^2(\HH)$ with compact hyperbolic spectral support and set $f_n(x,\theta)=\phi(x)e^{in\theta}$. One has the exact identities
\begin{align*}
 \|\U(t)f_n\|_{\X([0,1])}
 &=(2\pi)^{1/6}\|e^{it\Delta_H}\phi\|_{L_t^2([0,1];L_x^6)},\\
 \|\br{D_\theta}^sf_n\|_2
 &=(2\pi)^{1/2}\br n^s\|\phi\|_2.
\end{align*}
The first norm is finite: apply the matched-band estimate to $f_0$ and cover the compact hyperbolic support by finitely many bands. It is strictly positive, since otherwise $e^{it\Delta_H}\phi=0$ almost everywhere for almost every $t$, contradicting its constant nonzero $L^2$ norm. The first expression is independent of $n$, while the second tends to zero for $s<0$. Thus $\mathcal A\subset[0,\infty)$, completing the proof.
\end{proof}

\section{The zero-regularity endpoint}

Theorem~\ref{thm:main} identifies the infimum of the regularity indices but does not prove that this infimum is attained. There are two explicit places where the present proof permits a small frequency loss. Proposition~\ref{prop:vector-dec} uses a decoupling constant $C_\eps\delta^{-\eps}$, and the elementary hyperbolic reconstruction in \eqref{eq:H-sum} absorbs logarithmic sums into a positive power of the periodic frequency. Neither step gives a bound uniform as $\eps\downarrow0$.

The single-mode example in the proof of Theorem~\ref{thm:main} excludes $s<0$ but gives no obstruction at $s=0$. Likewise, a lower bound for an absolute propagator kernel would not by itself yield a lower bound for the homogeneous operator norm. The arithmetic estimate \eqref{eq:weighted-J} is used as an upper bound in $TT^*$, and need not be saturated by a common family of initial data.

One can state the remaining issue as a uniform frequency-band problem. Define
\begin{equation}\label{eq:endpoint-band-constants}
 \mathfrak C_N=
 \sup_{\substack{0\ne h\in\operatorname{Ran}Q_N}}
 \frac{\|\U(t)h\|_{\X([0,1])}}{\|h\|_2}.
\end{equation}
Proposition~\ref{prop:N} says that $\mathfrak C_N\le C_\eps N^\eps$ for every $\eps>0$. The zero-regularity estimate holds if and only if
\begin{equation}\label{eq:endpoint-equivalent}
 \sup_{N\in\mathcal D}\mathfrak C_N<\infty.
\end{equation}
Necessity follows by applying that estimate to data in a single band. Conversely, \eqref{eq:theta-sum}, \eqref{eq:endpoint-equivalent}, and \eqref{eq:theta-weighted-parseval} with $s=0$ give the estimate for finite frequency sums; the same density argument then gives it for all $L^2$ data.

The estimates in this paper do not imply \eqref{eq:endpoint-equivalent}. For example, the explicit summation constant in the low hyperbolic range contains
\[
 \sum_{j\ge0}(j+1)2^{-2\eps j}
 =(1-2^{-2\eps})^{-2}\asymp\eps^{-2}
 \quad\text{as }\eps\downarrow0.
\]
In addition, the decoupling theorem does not supply a constant uniform in $\eps$. The bounds $\mathfrak C_N\lesssim_\eps N^\eps$ exclude growth by a fixed positive power, but do not exclude logarithmic or other subpolynomial growth. No such lower growth is proved here.

Accordingly, the remaining endpoint problem is the precise estimate
\begin{equation}\label{eq:endpoint-question}
 \norm[{L^2([0,1];L^6(\HH\times\T))}]{\U(t)f}
 \stackrel{?}{\lesssim}\norm[{L^2(\HH\times\T)}]{f}.
\end{equation}
The result proved here rules out a strictly positive optimal power loss. It leaves open, within this argument, both a uniform endpoint estimate and a possible subpolynomial obstruction to it.

\appendix
\section{Uniform symbols in the spherical expansion}\label{app:spherical}

This appendix verifies the differentiated amplitude estimates in Lemma~\ref{lem:spherical}. The integral representations and Harish--Chandra connection coefficients are classical spherical-function identities with the normalization $\varphi_\lambda(0)=1$; see \cite[Chapter IV]{Helgason}. They are consistent with the spectral-measure description in \cite{CH} and the spherical-function analysis in \cite[Section 2]{GL}. The hypergeometric transformation and gamma identities used below are listed in \cite[\S\S5.5, 5.11, 15.8]{DLMF}. The issue addressed here is uniformity of arbitrarily many derivatives in $\lambda$.

\subsection{Distances at most one}

The Mehler--Dirichlet formula is
\begin{equation}\label{eq:mehler}
 \varphi_\lambda(r)=\frac{\sqrt2}{2\pi}\int_{-r}^r
 e^{i\lambda s}(\cosh r-\cosh s)^{-1/2}\dd s.
\end{equation}
Suppose $0<r\le1$ and $\lambda r\ge1$. Choose a smooth partition in $s/r$ into two endpoint pieces and one piece supported away from the endpoints. Near $s=r$, write $s=r-v$. The amplitude takes the form
\begin{equation}\label{eq:endpoint-amplitude}
 (\cosh r-\cosh(r-v))^{-1/2}
 =(\sinh r)^{-1/2}v^{-1/2}h_r(v),
\end{equation}
where $h_r$ is smooth up to $v=0$ on $0\le v\le r/2$ and
\[
 |\partial_v^j h_r(v)|\le C_jr^{-j}.
\]
Indeed, $(\cosh r-\cosh(r-v))/(v\sinh r)$ is a positive smooth function of $v/r$ and $r$ on this compact region, bounded above and below uniformly. Include the endpoint cutoff in $h_r$, so that it is supported where $v\le r/2$.

After factoring out $e^{i\lambda r}(\sinh r)^{-1/2}$, the endpoint integral is
\[
 A_+(\lambda,r)=\int_0^{r/2}e^{-i\lambda v}v^{-1/2}h_r(v)\dd v.
\]
For each $j\ge0$, differentiation inserts a factor $(-iv)^j$. On $v\lesssim\lambda^{-1}$, direct integration bounds the result by $C_j\lambda^{-j-1/2}$. On a dyadic interval $v\asymp V=2^k/\lambda$, derivatives of the localized amplitude are bounded by $C_{j,M}V^{j-1/2-M}$, since $V\lesssim r$. Integrating by parts $M$ times therefore bounds that piece by
\[
 C_{j,M}\lambda^{-M}V^{j+1/2-M}
 =C_{j,M}\lambda^{-j-1/2}2^{k(j+1/2-M)}.
\]
The sum converges when $M>j+1/2$. Thus
\begin{equation}\label{eq:smallr-endpoint-symbol}
 |\partial_\lambda^jA_+(\lambda,r)|\le C_j\lambda^{-j-1/2}.
\end{equation}
The other endpoint gives the same estimate with phase $e^{-i\lambda r}$.

For the interior piece $R(\lambda,r)$, the substitution $s=ru$ produces a smooth amplitude supported in a fixed compact subinterval of $(-1,1)$, with all $u$ derivatives bounded uniformly in $r$. Integration by parts gives, for every $j,M$,
\[
 |\partial_\lambda^jR(\lambda,r)|\le C_{j,M}r^j(\lambda r)^{-M}.
\]
Assign this piece to the positive phase by writing $R=e^{i\lambda r}(e^{-i\lambda r}R)$. Derivatives of the parenthesized amplitude satisfy the same bound after changing constants. Since $\lambda r\ge1$, choosing $M\ge j+1$ gives
\[
 |\partial_\lambda^j(e^{-i\lambda r}R)|
 \lesssim_j\lambda^{-j-1/2}r^{-1/2}
 \asymp\lambda^{-j-1/2}(\sinh r)^{-1/2}.
\]
Together with \eqref{eq:smallr-endpoint-symbol}, this proves the required two-phase bounds for $r\le1$.

\subsection{Distances at least one}

The radial eigenfunction equation is
\[
 \varphi_\lambda''+\coth r\,\varphi_\lambda'
 +(\lambda^2+\tfrac14)\varphi_\lambda=0.
\]
For $w=(\sinh r)^{1/2}\varphi_\lambda$ it becomes
\begin{equation}\label{eq:jost-ode}
 w''+\left(\lambda^2+\frac1{4\sinh^2r}\right)w=0,
 \qquad
 \frac1{4\sinh^2r}=\sum_{j\ge1}j e^{-2jr}.
\end{equation}
A solution with positive outgoing phase is
\begin{equation}\label{eq:jost-series}
 w_+(r,\lambda)=e^{i\lambda r}\sum_{k\ge0}a_k(\lambda)e^{-2kr},
 \qquad a_0=1,
\end{equation}
where substitution in \eqref{eq:jost-ode} gives
\begin{equation}\label{eq:jost-rec}
 4k(k-i\lambda)a_k(\lambda)
 =-\sum_{j=1}^k j\,a_{k-j}(\lambda),\qquad k\ge1.
\end{equation}
For real $\lambda\ge1$, consider the complex disk $|z-\lambda|\le\lambda/2$. If $k\le\lambda$, then $|k-iz|\ge\Re z\ge\lambda/2\ge k/2$; if $k\ge\lambda$, then $\Re(k-iz)\ge k-\lambda/2\ge k/2$. Induction in \eqref{eq:jost-rec} consequently gives
\[
 |a_k(z)|\le1,
\]
because $\sum_{j=1}^kj=k(k+1)/2\le2k^2$. Cauchy's estimate yields
\begin{equation}\label{eq:jost-symbol}
 |\partial_\lambda^m a_k(\lambda)|\le m!(2/\lambda)^m.
\end{equation}
The series in \eqref{eq:jost-series}, and all its $\lambda$ derivatives, converge uniformly for $r\ge1$. Its amplitude is therefore a symbol of order zero, uniformly in $r$. Complex conjugation supplies the negative-phase solution.

The connection coefficients for the normalized spherical function are expressed through
\begin{equation}\label{eq:c-function}
 \mathbf c(\lambda)=\frac{\Gamma(i\lambda)}{\sqrt\pi\,\Gamma(1/2+i\lambda)},
 \qquad \mathbf c(-\lambda)=\overline{\mathbf c(\lambda)}.
\end{equation}
In the normalization \eqref{eq:jost-series}, the exact connection formula is
\[
 w=\frac1{\sqrt2}\bigl(\mathbf c(\lambda)w_++\mathbf c(-\lambda)w_-\bigr).
\]
One can check the coefficients from the regular solution of the radial equation,
\[
 \varphi_\lambda(r)={}_2F_1\left(\tfrac12+i\lambda,\tfrac12-i\lambda;
 1;-\sinh^2(r/2)\right).
\]
The hypergeometric connection formula at infinity \cite[Eq.~15.8.2]{DLMF} gives the coefficient of $e^{(i\lambda-1/2)r}$ as
\[
 4^{1/2-i\lambda}\frac{\Gamma(2i\lambda)}{\Gamma(1/2+i\lambda)^2}
 =\frac{\Gamma(i\lambda)}{\sqrt\pi\,\Gamma(1/2+i\lambda)},
\]
where the equality is the gamma duplication identity \cite[Eq.~5.5.5]{DLMF}. Passing from $\varphi_\lambda$ to $w$ introduces the factor $1/\sqrt2$. The gamma-ratio estimate can also be differentiated without a formal
termwise differentiation of an asymptotic series.  For real $\lambda\ge2$,
consider the disk $|z-\lambda|\le\lambda/4$.  The points $iz$ remain in a
fixed closed sector separated from the negative real axis, and their
moduli are comparable to $\lambda$.  The uniform gamma-ratio form of Stirling\textquotesingle s formula \cite[Eq.~5.11.13]{DLMF} gives
\[
 \left|\frac{\Gamma(iz)}{\Gamma(1/2+iz)}\right|
 \le C\lambda^{-1/2}
\]
on this disk.  The ratio is holomorphic there.  Cauchy's estimate on the
disk of radius $\lambda/4$ gives
\[
 |\partial_\lambda^m\mathbf c(\lambda)|
 \le C_m\lambda^{-1/2-m}.
\]
The compact parameter interval $1\le\lambda\le2$ is covered by
smoothness.  Complex conjugation gives the estimate for
$\mathbf c(-\lambda)$.  In addition, the uniformly convergent Jost series
may be differentiated in $r$ any fixed number of times for $r\ge1$,
because $\sum_k k^j e^{-2k}<\infty$.  Thus it solves the radial equation
term by term, and its use in the connection formula is justified.

Together with \eqref{eq:jost-symbol}, this gives the following bound for the $m$th $\lambda$ derivative of either amplitude of $\varphi_\lambda$:
\[
 C_m\lambda^{-1/2-m}(\sinh r)^{-1/2}.
\]
To extend the construction to $\lambda r\ge c>0$, it remains to cover $c\le\lambda r\le2$. Here $r\le2$ because $\lambda\ge1$. Differentiation of the boundary integral gives
\[
 |\partial_\lambda^j\varphi_\lambda(r)|
 \le r^j\varphi_0(r)\le r^j.
\]
On this region, take the negative-phase amplitude to be zero and the positive-phase amplitude to be $e^{-i\lambda r}\varphi_\lambda(r)$. Its $j$th derivative is bounded by $C_jr^j\le C_j\lambda^{-j}$. Since $r\le2$ and $\lambda r\le2$, this is bounded by
\[
 C_j\lambda^{-j-1/2}(\sinh r)^{-1/2}.
\]
Use a smooth cutoff in $\lambda r$ that is one for $\lambda r\le1$ and zero for $\lambda r\ge2$ to join this choice to the oscillatory amplitudes. The overlap lies in the region where both constructions are valid, and every derivative of the cutoff contributes at most a constant multiple of $\lambda^{-1}$.

Finally, $\lambda\tanh(\pi\lambda)$ is a symbol of order one on $[1,\infty)$, so multiplication proves \eqref{eq:b-symbol}. A smooth partition near $r=1$ combines the two constructions. Fixed changes in the lower bound on $\lambda r$ change only the constants.

\end{document}